\numberwithin{theorem}{section} 
\declaretheorem[numberlike=theorem,Refname={Lemma,Lemmas}]{lemma}
\declaretheorem[numberlike=theorem,Refname={Corollary,Corollaries}]{corollary}
\newcommand{\R}{\mathbb{R}}             
\newcommand{\N}{\mathbb{N}}             
\DeclareMathOperator{\neper}{e}         
\renewcommand{\epsilon}{\varepsilon}    
\newcommand{\rkhs}{\mathcal{H}}         
\DeclareMathOperator*{\argmin}{arg\,min}  
\DeclarePairedDelimiterX\Set[2]{\lbrace}{\rbrace}%
{ #1 \,:\, #2 } 
\DeclarePairedDelimiterX\inprod[2]{\langle}{\rangle}%
{ #1 , #2 } 
\newcommand{\T}{\mathsf{T}}         
\newcommand{\lspan}{\mathrm{span}}          
\renewcommand{\b}[1]{#1} 
\newcommand{\textsm}[1]{\text{\tiny{#1}}}
\begin{document}

\title{Worst-case optimal approximation with increasingly flat Gaussian kernels}
\author{Toni Karvonen \and Simo Särkkä}
\institute{
   Department of Electrical Engineering and Automation\\
  Aalto University, Espoo, Finland\\
  \email{tskarvon@iki.fi, simo.sarkka@aalto.fi}
}

\maketitle


\begin{abstract} We study worst-case optimal approximation of positive linear functionals in reproducing kernel Hilbert spaces induced by increasingly flat Gaussian kernels. This provides a new perspective and some generalisations to the problem of interpolation with increasingly flat radial basis functions. When the evaluation points are fixed and unisolvent, we show that the worst-case optimal method converges to a polynomial method. In an additional one-dimensional extension, we allow also the points to be selected optimally and show that in this case convergence is to the unique Gaussian quadrature type method that achieves the maximal polynomial degree of exactness. The proofs are based on an explicit characterisation of the reproducing kernel Hilbert space of the Gaussian kernel in terms of exponentially damped polynomials.
\end{abstract}

\keywords{Worst-case analysis \and Reproducing kernel Hilbert spaces \and Gaussian kernel \and Gaussian quadrature}


\section{Introduction}

Most popular kernels used scattered data approximation~\citep{FasshauerMcCourt2015,Wendland2005} and Gaussian process regression~\citep{RasmussenWilliams2006} are \emph{isotropic} (i.e., radial basis functions), depending only on the Euclidean distance $\norm[0]{\cdot}_2$ between the points:
\begin{equation} \label{eq:kernel}
  K_\ell(x,x') = \Phi \bigg( \frac{\norm[0]{x-x'}_2}{\ell} \bigg)
\end{equation}
for a continuous positive-definite function $\Phi \colon [0,\infty) \to \R$ and a \emph{length-scale} parameter $\ell > 0$. Given any function $f \colon \R^d \to \R$ evaluated at distinct points \sloppy{${X = \{x_1, \ldots, x_N\} \subset \R^d}$} such a kernel can be used to construct a unique \emph{kernel interpolant} based on the translates $\{K_\ell(\cdot, x_n)\}_{n=1}^N$. The kernel interpolant is
  \begin{equation} \label{eq:lagrange-form}
    s_{\ell,f,X}(x) = \sum_{n=1}^N f(x_n) u_{\ell,n}(x),
  \end{equation}
where $u_n$ are the Lagrange cardinal functions that solve the linear system
    \begin{equation} \label{eq:s-system}
      \begin{bmatrix} K_\ell(\b{x}_1, \b{x}_1) & \cdots & K_\ell(\b{x}_1, \b{x}_N) \\ \vdots & \ddots & \vdots \\ K_\ell(\b{x}_N,\b{x}_1) & \cdots & K_\ell(\b{x}_N, \b{x}_N) \end{bmatrix} \begin{bmatrix} u_{\ell,1}(x) \\ \vdots \\ u_{\ell,N}(x) \end{bmatrix} = \begin{bmatrix} K_\ell(x,x_1) \\ \vdots \\ K_\ell(x,x_N) \end{bmatrix}
    \end{equation}
and satisfy $u_{\ell,n}(x_m) = \delta_{nm}$.
Uniqueness of the solution for each $x \in \R^d$ is guaranteed by positive-definiteness of the matrix on the left-hand side of this system.

When $\ell \to \infty$, the kernel $K_\ell$ becomes increasingly flat and the linear system~\eqref{eq:s-system} increasingly ill-conditioned.\footnote{Note that most of the literature we cite parametrises the kernel in terms of the inverse length-scale $\epsilon = 1/\ell$ and accordingly considers the case $\epsilon \to 0$.} Nevertheless, the corresponding kernel interpolant is typically well-behaved at this limit. Starting with the work of \mbox{\citet{DriscollFornberg2002}}, it has been shown that a certain unisolvency assumption on $X$ implies that the kernel interpolant converges to (i) a polynomial interpolant if the kernel is infinitely smooth~\citep{DriscollFornberg2002,Fornberg2004,LarssonFornberg2005,LeeYoonYoon2007,Schaback2005,Schaback2008} or (ii) a polyharmonic spline interpolant if the kernel is finitely smooth~\citep{Lee2014,Song2012}. Further generalisations appear in~\citep{Lee2015}. The former case covers kernels such as Gaussians, multiquadrics, and inverse multiquadrics while the latter applies to, for example, Matérn kernels and Wendland's functions.
Among the most interesting of these results is the one by~\citet{Schaback2005} who proved that the interpolant at the increasingly flat limit of the Gaussian kernel
\begin{equation}\label{eq:gauss-kernel}
K_\ell(\b{x}, \b{x}') = \exp\bigg( \! - \frac{\norm[0]{\b{x}-\b{x}'}_2^2}{2\ell^2} \bigg) 
\end{equation}
exists regardless of the geometry of $X$ and coincides with the de Boor and Ron polynomial interpolant~\citep{deBoor1994,deBoorRon1992a}.
Furthermore, numerical ill-conditioning for large $\ell$, mentioned above, has necessitated the development of techniques for stable evaluation of the kernel interpolant~\citep{Cavoretto2015,FasshauerMcCourt2012,Fornberg2013,Wright2017}.
Increasingly flat kernels have been also discussed independently in the literature on the use of Gaussian processes for numerical integration~\cite{Minka2000,OHagan1991,Sarkka2016}, albeit accompanied only with non-rigorous arguments. Even though the intuition that the lowest degree terms in the Taylor expansion of the kernel dominate construction of the interpolant as $\ell \to \infty$ and that this ought to imply convergence to a polynomial interpolant is quite clear, this is not always translated into transparent proofs.

The purpose of this article is to generalise the aforementioned results on flat limits of kernel interpolants for worst-case optimal approximation of general positive linear functionals in the reproducing kernel Hilbert space (RKHS) of the Gaussian kernel~\eqref{eq:gauss-kernel}.
That such generalisations are possible is not perhaps surprising; it is rather the simple proof technique made possible by the worst-case framework and an explicit characterisation~\citep{Minh2010} of the Gaussian RKHS that we find the most interesting aspect of the present work.

\subsection{Worst-case optimal approximation}

Let $\Omega$ be a subset of $\R^d$ with a non-empty interior and $L \colon C(\Omega) \to \R$ a positive linear functional acting on continuous real-valued functions defined on $\Omega$ and satisfying \sloppy{${L[\abs[0]{p}] < \infty}$} for every polynomial $p$ on $\Omega$. 
The functionals most often discussed in this article are the point evaluation and the integration functionals
\begin{equation} \label{eq:functional-examples}
  L_x[f] = f(x) \quad \text{ and } \quad L_\mu[f] = \int_\Omega f \dif \mu \: \text{ for a Borel measure $\mu$ on $\Omega$},
\end{equation}
respectively.
Derivative evaluation functionals $L_x^{(n)}[f] = f^{(n)}(x)$ are also often considered.
A \emph{cubature rule} (\emph{quadrature} if $d=1$) $Q_X(w) \colon C(\Omega) \to \R$ with the distinct points $X = \{\b{x}_1, \ldots, \b{x}_N\} \subset \Omega$ and weights $\b{w} = (\b{w}(1), \ldots, \b{w}(N)) \in \R^N$ is a weighted approximation to $L$ of the form
\begin{equation}\label{eq:cubature}
Q_X(w)[f] = \sum_{n=1}^N \b{w}(n) f(\b{x}_n) \approx L[f].
\end{equation}
When restricted on $\Omega \times \Omega$, the positive-definite kernel $K_\ell$ in~\eqref{eq:kernel} induces a unique reproducing kernel Hilbert space $\rkhs(K_\ell) \subset C(\Omega)$ where the reproducing property $\inprod{f}{K_\ell(\cdot,x)}_{\mathcal{H}(K_\ell)} = f(x)$ holds for every $x \in \Omega$ and $f \in \mathcal{H}(K_\ell)$. With minor modifications everything in this section holds also when the kernel is not isotropic. Because the kernel is isotropic, $L[K_\ell(x,x)] \leq L[\Phi(0)] < \infty$ by the assumption that $L[p]$ is finite if $p$ is a polynomial.
This guarantees that $L[K_\ell(\cdot,x)] \in \rkhs(K_\ell)$ for any $x \in \Omega$ and consequently that $L[f] < \infty$ for any $f \in \rkhs(K_\ell)$. 

The \emph{worst-case error} $e_\ell(Q_X(w))$ of the cubature rule~\eqref{eq:cubature} in $\rkhs(K_\ell)$ is
\begin{equation}\label{eq:wce}
e_\ell\big(Q_X(w) \big) = \sup_{\norm[0]{f}_{\rkhs(K_\ell)} \leq 1} \, \abs[3]{L[f] - \sum_{n=1}^N \b{w}(n) f(\b{x}_n) }.
\end{equation}
Given a fixed set of distinct points, we are interested in the \emph{kernel cubature rule} $Q_X(w_\ell^*)$ whose weights are chosen so as to minimise the worst-case error:
\begin{equation*}
\b{w}_\ell^* = \argmin_{\b{w} \in \R^N} e_\ell \big(Q_X(w) \big) \quad \text{ and } \quad e_\ell(Q_X(w_\ell^*)\big) = \inf_{\b{w} \in \R^N} e_\ell \big( Q_X(w) \big).
\end{equation*}
These weights are unique and available as the solution to the linear system~\citep[Section~3.2]{Oettershagen2017}
\begin{equation}\label{eq:weight-system}
\begin{bmatrix} K_\ell(\b{x}_1, \b{x}_1) & \cdots & K_\ell(\b{x}_1, \b{x}_N) \\ \vdots & \ddots & \vdots \\ K_\ell(\b{x}_N,\b{x}_1) & \cdots & K_\ell(\b{x}_N, \b{x}_N) \end{bmatrix} \begin{bmatrix} \b{w}_\ell^*(1) \\ \vdots \\ \b{w}_\ell^*(N) \end{bmatrix} = \begin{bmatrix} L[K_\ell(\cdot, \b{x}_1)] \\ \vdots \\ L[K_\ell(\cdot, \b{x}_N)] \end{bmatrix}.
\end{equation}
Although our notation does not make this explicit, the weights obviously depend on the linear functional $L$ and the evaluation points $X$.
For each $x \in \R^d$, the kernel interpolant $s_{\ell,f,X}(x)$ now arises as the kernel cubature rule for approximation of the point evaluation functional $L_x$ in~\eqref{eq:functional-examples} and the Lagrange functions are $u_{\ell,n}(x) = w_\ell^*(n)$.
In this case the worst-case error coincides with the power function~\citep{Schaback1993}.
For an arbitrary $L$, the kernel cubature rule can be obtained by applying $L$ to the kernel interpolant:
\begin{equation*}
  Q_X(w_\ell^*) = L[s_{\ell,f,X}] = \sum_{n=1}^N f(x_n) L[u_{\ell,n}].
\end{equation*}
That is, the weights are $w_\ell^*(n) = L[u_{\ell,n}]$.

\subsection{Contributions}

Recall that we only consider the Gaussian kernel~\eqref{eq:gauss-kernel}. This article contains two theoretical main contributions:
\begin{itemize}
\item In Section~\ref{sec:fixed} we prove that if $X$ is unisolvent with respect to a full polynomial space $\Pi_m$ and $N = \dim \Pi_m$, then $Q_X(w_\ell^*)$ converges (as $\ell \to \infty$) to the unique cubature rule $Q_X(w_\textsm{pol})$ that satisfies \sloppy{${Q_X(w_\textsm{pol})[p] = L[p]}$} for every polynomial $p$ of degree at most $m$. This result, contained in Theorem~\ref{thm:unisolvent} and Corollary~\ref{cor:unisolvent-bounded}, is a generalisation for arbitrary positive linear functionals of the interpolation results cited earlier. If $\Omega$ is bounded, the results hold for any positive linear functional satisfying the mild assumptions imposed earlier. However, boundedness of $\Omega$ is not necessary: at the end of Section~\ref{sec:fixed} we supply an example involving integration over $\R^d$ with respect to the Gaussian measure.
\item In Section~\ref{sec:optimal} we present a generalisation, based on a theorem of \mbox{\citet{Barrow1978}}, for \emph{optimal kernel quadrature rules}~\cite[Chapter~5]{Oettershagen2017} that have both their points and weights selected so as to minimise the worst-case error. The result, Theorem~\ref{thm:optimal}, states that such rules, if unique, converge to the $N$-point Gaussian quadrature rule for the functional $L$, which is the unique quadrature rule $Q_{X_\textsm{G}}(w_\textsm{G})$ such that $Q_{X_\textsm{G}}(w_\textsm{G})[p] = L[p]$ for every polynomial $p$ of degree at most $2N-1$. This partially settles a conjecture posed by \citet[Section 3.3]{OHagan1991}, and further discussed in~\citep{Minka2000,Sarkka2016}, on convergence of optimal kernel quadrature rules to Gaussian quadrature rules.
\end{itemize}
Some generalisations for other kernels and cubature rules of more general form than~\eqref{eq:cubature} are briefly discussed in Section~\ref{sec:generalisations}.

\section{Fixed points} \label{sec:fixed}

The following theorem, which provides a characterisation of the RKHS of the Gaussian kernel~\eqref{eq:gauss-kernel}, is the central tool of this article. This results is due to \citet{Steinwart2006} and \citet{Minh2010}; see also~\citep[Section 4.4]{Steinwart2008} and~\citep[Example~3]{DeMarchiSchaback2009}. 
In this theorem (and the remainder of the article) $\N_0^d$ stands for the collection of $d$-dimensional non-negative multi-indices: $\N_0^d = \Set{ (\alpha_1, \ldots, \alpha_d) \in \R^d}{ \alpha_1, \ldots, \alpha_d \in \N_0}$. The absolute value and factorial of $\alpha \in \N_0^d$ are $\abs[0]{\alpha} = \alpha_1 + \cdots + \alpha_d$ and $\alpha! = \alpha_1! \times \cdots \times \alpha_d!$.

\begin{theorem}[Steinwart 2006; Minh 2010]\label{thm:gauss-rkhs} Let $\Omega$ be a subset of $\R^d$ with a non-empty interior. Then the RKHS $\rkhs(K_\ell)$ induced by the Gaussian kernel~\eqref{eq:gauss-kernel} with length-scale $\ell > 0$ consists of the functions
\begin{equation} \label{eq:RKHS-expansion}
f(\b{x}) = \neper^{-\norm[0]{\b{x}}_2^2/(2\ell^2)} \sum_{\b{\alpha} \in \N_0^d} f_{\b{\alpha}} \b{x}^{\b {\alpha}} \quad \text{such that} \quad \norm[0]{f}_{\rkhs(K_\ell)}^2 = \sum_{\b{\alpha} \in \N_0^d} \ell^{2\abs[0]{\b{\alpha}}} \b{\alpha}! f_{\b{\alpha}}^2 < \infty,
\end{equation}
where convergence is absolute. Its inner product is $\inprod{f}{g}_{\rkhs(K_\ell)} = \sum_{\alpha \in \N_0^d} \ell^{2\abs[0]{\alpha}} \alpha! f_\alpha g_\alpha$.
Furthermore, the collection
\begin{equation} \label{eq:orthobasis}
  \bigg\{ \frac{1}{\ell^{\abs[0]{\alpha}} \sqrt{\alpha!}} \neper^{-\norm[0]{x}_2^2/(2\ell^2)} x^\alpha \bigg\}_{\alpha \in \N_0^d}
\end{equation}
of functions forms an orthonormal basis of $\rkhs(K_\ell)$.
\end{theorem}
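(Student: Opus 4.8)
The plan is to derive an explicit feature-map (Mercer-type) expansion of the Gaussian kernel and then read off the RKHS from the standard correspondence between feature maps and reproducing kernel Hilbert spaces \citep{Steinwart2008}. Writing $\norm[0]{\b{x}-\b{x}'}_2^2 = \norm[0]{\b{x}}_2^2 - 2\inprod{\b{x}}{\b{x}'} + \norm[0]{\b{x}'}_2^2$ and expanding the resulting cross term with the multivariate exponential series,
\begin{equation*}
  \neper^{\inprod{\b{x}}{\b{x}'}/\ell^2} = \prod_{i=1}^d \neper^{x_i x_i'/\ell^2} = \sum_{\b{\alpha} \in \N_0^d} \frac{\b{x}^{\b{\alpha}} (\b{x}')^{\b{\alpha}}}{\ell^{2\abs[0]{\b{\alpha}}} \, \b{\alpha}!},
\end{equation*}
one obtains, with $e_{\b{\alpha}}$ denoting the functions in~\eqref{eq:orthobasis}, the factorisation $K_\ell(\b{x},\b{x}') = \sum_{\b{\alpha} \in \N_0^d} e_{\b{\alpha}}(\b{x}) \, e_{\b{\alpha}}(\b{x}')$. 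In particular $\sum_{\b{\alpha}} e_{\b{\alpha}}(\b{x})^2 = K_\ell(\b{x},\b{x}) = 1$ for every $\b{x}$, so $\b{x} \mapsto (e_{\b{\alpha}}(\b{x}))_{\b{\alpha} \in \N_0^d}$ is a feature map into $\ell^2(\N_0^d)$.

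By the general feature-map construction, $\rkhs(K_\ell)$ then consists exactly of the functions $f = \sum_{\b{\alpha}} c_{\b{\alpha}} e_{\b{\alpha}}$ with coefficient sequence $(c_{\b{\alpha}}) \in \ell^2(\N_0^d)$, and $\norm[0]{f}_{\rkhs(K_\ell)}$ equals the infimum of $\norm[0]{(c_{\b{\alpha}})}_{\ell^2}$ taken over all representations of $f$. The absolute pointwise convergence asserted in~\eqref{eq:RKHS-expansion} is immediate from Cauchy--Schwarz, since $\sum_{\b{\alpha}} \abs[0]{c_{\b{\alpha}}} \abs[0]{e_{\b{\alpha}}(\b{x})} \leq \norm[0]{(c_{\b{\alpha}})}_{\ell^2} \, K_\ell(\b{x},\b{x})^{1/2} = \norm[0]{(c_{\b{\alpha}})}_{\ell^2}$. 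What remains is to upgrade this spanning family to an orthonormal basis and to obtain the clean norm formula, both of which follow once the coefficient-to-function map $(c_{\b{\alpha}}) \mapsto \sum_{\b{\alpha}} c_{\b{\alpha}} e_{\b{\alpha}}$ is shown to be injective; for then the representation of each $f$ is unique, the defining infimum is attained at those coefficients, $\{e_{\b{\alpha}}\}$ forms an orthonormal basis, and $\norm[0]{f}_{\rkhs(K_\ell)}^2 = \sum_{\b{\alpha}} c_{\b{\alpha}}^2$.

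The injectivity is the only real obstacle, and it is precisely where the assumption that $\Omega$ has non-empty interior enters. Suppose $\sum_{\b{\alpha}} c_{\b{\alpha}} e_{\b{\alpha}} = 0$ as a function on $\Omega$ for some $(c_{\b{\alpha}}) \in \ell^2(\N_0^d)$. Because the prefactor $\neper^{-\norm[0]{\b{x}}_2^2/(2\ell^2)}$ never vanishes, dividing it out leaves the power series $\sum_{\b{\alpha}} c_{\b{\alpha}} \ell^{-\abs[0]{\b{\alpha}}} (\b{\alpha}!)^{-1/2} \b{x}^{\b{\alpha}}$, whose coefficients decay fast enough (the sequence $(c_{\b{\alpha}})$ is bounded) that it defines an entire function vanishing on the non-empty interior of $\Omega$. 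Such a function is identically zero, so all its coefficients vanish and hence $c_{\b{\alpha}} = 0$ for every $\b{\alpha}$, which is the required injectivity. Finally, substituting $c_{\b{\alpha}} = \ell^{\abs[0]{\b{\alpha}}} \sqrt{\b{\alpha}!} \, f_{\b{\alpha}}$ rewrites $f = \sum_{\b{\alpha}} c_{\b{\alpha}} e_{\b{\alpha}}$ in the form~\eqref{eq:RKHS-expansion} and converts the norm into $\sum_{\b{\alpha}} \ell^{2\abs[0]{\b{\alpha}}} \b{\alpha}! f_{\b{\alpha}}^2$; polarisation then yields the stated inner product.
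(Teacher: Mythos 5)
The paper itself gives no proof of this theorem---it imports it from Steinwart et al.\ (2006) and Minh (2010)---and your argument is correct and essentially reproduces the standard proof in those references: expand $\neper^{\inprod{x}{x'}/\ell^2}$ to obtain the explicit $\ell^2(\N_0^d)$-feature map $(e_\alpha)$ with $\sum_\alpha e_\alpha(x)^2 = K_\ell(x,x) = 1$, invoke the feature-map characterisation of the RKHS, and use the non-empty interior of $\Omega$ exactly where it is genuinely needed, namely for injectivity via the identity theorem for real-analytic functions. All the supporting steps check out as well: the super-factorial decay $\abs[0]{c_\alpha}\ell^{-\abs[0]{\alpha}}/\sqrt{\alpha!}$ does make the divided-out power series entire, and injectivity does upgrade the metric surjection $\ell^2(\N_0^d) \to \rkhs(K_\ell)$ to an isometric isomorphism carrying the canonical orthonormal basis to the functions in~\eqref{eq:orthobasis}, which yields the norm formula and, by polarisation, the inner product.
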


Two crucial implications of this theorem are that $\rkhs(K_\ell)$ consists of functions expressible as series of exponentially damped polynomials, the damping effect vanishing as $\ell \to \infty$, and that, due to the terms $\ell^{2\abs[0]{\b{\alpha}}}$ appearing in the RKHS norm, the high-degree terms contribute the most to the norm. Consequently, the worst-case error~\eqref{eq:wce}, taking into account only functions of at most unit norm, is dominated by low-degree terms when $\ell$ is large.
The rest of this section formalises this intuition.

Let $\Pi_m \subset C(\Omega)$ stand for the space of $d$-variate polynomials of degree at most $m \in \N_0$:
\begin{equation*}
\Pi_m = \mathrm{span} \Set{\b{x}^{\b{\alpha}}}{ \b{\alpha} \in \N_0^d,\: \abs[0]{\b{\alpha}} \leq m }.
\end{equation*}
In this section we assume that the point set $X \subset \Omega \subset \R^d$ is $\Pi_m$-\emph{unisolvent}. That is, 
\begin{equation*}
N = \# X = \dim \Pi_m = \binom{m+d}{d} = \frac{(m+d)!}{d! \, m!}
\end{equation*}
and the zero function is the only element of $\Pi_m$ that vanishes on $X$. This is equivalent to non-singularity of the (generalised) Vandermonde matrix
\begin{equation} \label{eq:vandermonde}
\b{P}_\Pi = \begin{bmatrix} \b{x}_1^{\b{\alpha}_1} & \cdots & \b{x}_1^{\b{\alpha}_N} \\ \vdots & \ddots & \vdots \\ \b{x}_N^{\b{\alpha}_1} & \cdots & \b{x}_N^{\b{\alpha}_N} \end{bmatrix} ,
\end{equation}
where $\{\b{\alpha}_1$, \ldots, $\b{\alpha}_N\} = \Set{\alpha \in \N_0^d}{\abs[0]{\alpha} \leq m} \subset \N_0^d$. It follows that there is a unique \emph{polynomial cubature rule} $Q_X(w_\textsm{pol})$ such that $Q_X(w_\textsm{pol})[p] = L[p] < \infty$ for every $p \in \Pi_m$. Its weights solve the linear system $\b{P}_\Pi^\T w_\textsm{pol} = \b{L}_\Pi$ of $N$ equations, where the $N$-vector $L_\Pi$ has the elements $[\b{L}_\Pi]_n = L[\b{x}^{\b{\alpha}_n}]$. 
In this section we prove that the worst-case optimal weights $w_\ell^*$ for the Gaussian kernel~\eqref{eq:gauss-kernel} converge to $w_\textsm{pol}$ as $\ell \to \infty$.

Define then
\begin{equation} \label{eq:phi-functions}
\phi_{\b{\alpha}}^\ell(\b{x}) = \neper^{-\norm[0]{\b{x}}_2^2/(2\ell^2)} \b{x}^{\b{\alpha}},
\end{equation}
so that functions in the Gaussian RKHS, characterised by Theorem~\ref{thm:gauss-rkhs}, are of the form $f(\b{x}) = \sum_{\b{\alpha} \in \N_0^d} f_{\b{\alpha}} \phi_{\b{\alpha}}^\ell(\b{x})$ for coefficients $f_{\b{\alpha}}$ decaying sufficiently fast. Since the exponential function has no real roots, determinant of the matrix
\begin{equation} \label{eq:vandermonde-general}
\b{P}_{\phi,\ell} = \begin{bmatrix} \phi_{\b{\alpha}_1}^\ell(\b{x}_1) & \cdots & \phi_{\b{\alpha}_N}^\ell(\b{x}_1) \\ \vdots & \ddots & \vdots \\ \phi_{\b{\alpha}_1}^\ell(\b{x}_N) & \cdots & \phi_{\b{\alpha}_N}^\ell(\b{x}_N) \end{bmatrix} 
\end{equation}
satisfies $\abs[0]{\b{P}_{\phi,\ell}} = \abs[0]{\b{P}_\Pi} \exp( - \sum_{n=1}^N \norm[0]{x_n}_2^2/(2\ell^2)) \neq 0$ and $P_{\phi,\ell}$ is hence non-singular. From non-singularity it follows that there are unique weights $\b{w}_{\phi,\ell}$ such that $Q_X(\b{w}_{\phi,\ell})[\phi_{\b{\alpha}}^\ell] = L[\phi_{\b{\alpha}}^\ell]$ for every $\b{\alpha} \in \N_0^d$ satisfying $\abs[0]{\b{\alpha}} \leq m$. The weights solve \sloppy{${\b{P}_{\phi,\ell}^\T \b{w}_{\phi,\ell} = \b{L}_{\phi,\ell}}$}, where the $N$-vector $L_{\Phi,\ell}$ has the elements $[\b{L}_{\phi,\ell}]_n = L[\phi_{\b{\alpha}_n}^\ell]$.\footnote{See~\citep{FasshauerMcCourt2012} for an interpolation method based on a closely related basis derived from a Mercer eigendecomposition of the Gaussian kernel and~\citep{Karvonen2018} for an explicit construction of weights similar to $w_{\phi,\ell}$ in the case $L$ is the Gaussian integral.} 
This auxiliary cubature rule plays an important role in our argument.
To summarise, the following three weights (or sequences of weights) appear in the proofs below:
  \begin{enumerate}
  \item The weights $w_\ell^*$, solved from~\eqref{eq:weight-system}, are the worst-case optimal weights for the Gaussian kernel~\eqref{eq:gauss-kernel}. The results concern the behaviour of these weights as $\ell \to \infty$.
  \item The weights $w_\textsm{pol}$ are constructed such that the cubature rule defined by them is exact for all polynomials up to degree $m$: $Q_X(w_\textsm{pol})[p] = L[p]$ whenever $p \in \Pi_m$.
  \item The auxiliary weights $w_{\phi,\ell}$ satisfy $Q_X(w_{\phi,\ell})[\phi^\ell_\alpha] = L[\phi_\alpha^\ell]$ for every $\ell > 0$ and $\abs[0]{\alpha} \leq m$.
  \end{enumerate}

\begin{lemma}\label{lemma:weight-sum} Suppose that $X$ is $\Pi_m$-unisolvent and $\lim_{\ell \to \infty} L[\phi_{\b{\alpha}}^\ell(x)] = L[x^\alpha]$ for every $\abs[0]{\b{\alpha}} \leq m$. Then there is a constant $C_{\ell_0} \geq 0$ such that \sloppy{${\sup_{\ell \geq \ell_0} \, \sum_{n=1}^N \abs[0]{\b{w}_{\phi,\ell}(n)} \leq C_{\ell_0}}$} for any $\ell_0 > 0$.
\end{lemma}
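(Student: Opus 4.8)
The plan is to show that the auxiliary weights $w_{\phi,\ell}$ converge to a finite limit as $\ell \to \infty$ and depend continuously on $\ell$ throughout $(0,\infty)$; then $\sum_{n=1}^N \abs[0]{w_{\phi,\ell}(n)}$ is a continuous function of $\ell$ on $[\ell_0,\infty)$ with a finite limit at infinity, and is therefore bounded.

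First I would write the weights explicitly as $w_{\phi,\ell} = (P_{\phi,\ell}^\T)^{-1} L_{\phi,\ell}$. This is well defined for every $\ell > 0$ because $\abs[0]{P_{\phi,\ell}} = \abs[0]{P_\Pi} \exp\big(-\sum_{n=1}^N \norm[0]{x_n}_2^2/(2\ell^2)\big) \neq 0$ by $\Pi_m$-unisolvency. Each entry $\phi_{\alpha_j}^\ell(x_i) = \neper^{-\norm[0]{x_i}_2^2/(2\ell^2)} x_i^{\alpha_j}$ is continuous in $\ell$, so $P_{\phi,\ell}$ is a continuous matrix-valued function of $\ell$ that is non-singular throughout $(0,\infty)$; hence $(P_{\phi,\ell}^\T)^{-1}$ is continuous in $\ell$ as well. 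The entries $L[\phi_{\alpha_n}^\ell]$ of $L_{\phi,\ell}$ are likewise continuous in $\ell$, so $\ell \mapsto w_{\phi,\ell}$ is continuous on $(0,\infty)$.

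Next I would pass to the limit $\ell \to \infty$. Since $\neper^{-\norm[0]{x_i}_2^2/(2\ell^2)} \to 1$, each entry satisfies $\phi_{\alpha_j}^\ell(x_i) \to x_i^{\alpha_j}$, so $P_{\phi,\ell} \to P_\Pi$ entrywise, while the hypothesis $\lim_{\ell \to \infty} L[\phi_\alpha^\ell] = L[x^\alpha]$ for $\abs[0]{\alpha} \leq m$ gives $L_{\phi,\ell} \to L_\Pi$. Because $P_\Pi$ is non-singular and matrix inversion is continuous at non-singular matrices, $(P_{\phi,\ell}^\T)^{-1} \to (P_\Pi^\T)^{-1}$, and therefore $w_{\phi,\ell} \to (P_\Pi^\T)^{-1} L_\Pi = w_\textsm{pol}$.

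Finally, $\ell \mapsto \sum_{n=1}^N \abs[0]{w_{\phi,\ell}(n)}$ is continuous on $[\ell_0,\infty)$ and tends to the finite value $\sum_{n=1}^N \abs[0]{w_\textsm{pol}(n)}$ as $\ell \to \infty$; extending it by this limit yields a continuous function on the compact set $[\ell_0,\infty]$, which is bounded, and I take $C_{\ell_0}$ to be this bound. The argument is elementary, and the only point needing care is that matrix inversion must remain continuous all the way up to the limit, which is exactly why the explicit determinant formula, guaranteeing that $P_{\phi,\ell}$ never degenerates on $(0,\infty)$, is needed. The restriction to $\ell \geq \ell_0 > 0$ is likewise essential, since as $\ell \to 0^+$ this determinant tends to $0$ and the uniform bound on the weights is lost.
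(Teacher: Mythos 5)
Your proof is correct and takes essentially the same route as the paper: both arguments show $w_{\phi,\ell} = (P_{\phi,\ell}^\T)^{-1} L_{\phi,\ell} \to w_\textsm{pol}$ using non-singularity of $P_\Pi$ together with the assumed convergence $L[\phi_{\b{\alpha}}^\ell] \to L[x^{\b{\alpha}}]$, and then deduce the uniform bound on the weight sums over $[\ell_0,\infty)$. If anything you are more careful than the paper, whose proof passes from pointwise finiteness of $w_{\phi,\ell}$ and the limit at infinity directly to the supremum bound, whereas you supply the needed continuity in $\ell$; your one unjustified assertion there, continuity of $\ell \mapsto L[\phi_{\b{\alpha}}^\ell]$ for an abstract positive $L$, is a one-line fix via $\abs[0]{\neper^{-t/(2\ell^2)} - \neper^{-t/(2\ell'^2)}} \leq t \, \abs[0]{1/(2\ell^2) - 1/(2\ell'^2)}$ for $t = \norm[0]{x}_2^2 \geq 0$ combined with $L[\norm[0]{x}_2^2 \abs[0]{x^{\b{\alpha}}}] < \infty$, which follows from the standing assumption $L[\abs[0]{p}] < \infty$ for polynomials $p$.
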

\begin{proof} The assumption $\lim_{\ell \to \infty} L[\phi_{\b{\alpha}}^\ell(x)] = L[x^\alpha]$ and unisolvency of $X$ imply that $\lim_{\ell \to \infty} \b{w}_{\phi,\ell} = \b{w}_\textsm{pol}$. Because $L[\abs[0]{p}] < \infty$ for any polynomial $p$, both the weights $w_\textsm{pol}$ and $w_{\phi,\ell}$ are finite, which implies the claim.
  \qed
\end{proof}

\begin{lemma} \label{lemma:convergence1} Suppose that $X$ is $\Pi_m$-unisolvent and $\lim_{\ell \to \infty} L[\phi_{\b{\alpha}}^\ell(x)] = L[x^\alpha]$ for every $\abs[0]{\b{\alpha}} \leq m$. If $(w_\ell)_{\ell > 0}$ is any sequence of weights such that
  \begin{equation*}
    \abs[1]{L[\phi_\alpha^\ell] - Q_X(w_\ell)[\phi_\alpha^\ell]} \to 0 \quad \text{ for every } \quad \abs[0]{\alpha} \leq m,
  \end{equation*}
  then $\lim_{\ell \to \infty} w_\ell = w_\textsm{pol}$.
\end{lemma}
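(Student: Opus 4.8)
The plan is to recast the hypotheses as a perturbed linear system and then invoke continuity of matrix inversion at the non-singular Vandermonde matrix $P_\Pi$. First I would rewrite everything in the notation of $P_{\phi,\ell}$ and $L_{\phi,\ell}$ introduced above. Since the multi-indices $\alpha_1, \ldots, \alpha_N$ exhaust $\Set{\alpha \in \N_0^d}{\abs[0]{\alpha} \leq m}$, and since $Q_X(w_\ell)[\phi_{\alpha_n}^\ell] = \sum_{k=1}^N w_\ell(k) \phi_{\alpha_n}^\ell(x_k) = [P_{\phi,\ell}^\T w_\ell]_n$ while $L[\phi_{\alpha_n}^\ell] = [L_{\phi,\ell}]_n$, the hypothesis $\abs[1]{L[\phi_\alpha^\ell] - Q_X(w_\ell)[\phi_\alpha^\ell]} \to 0$ for every $\abs[0]{\alpha} \leq m$ says exactly that the residual vector $\epsilon_\ell := L_{\phi,\ell} - P_{\phi,\ell}^\T w_\ell$ tends to $0$ componentwise as $\ell \to \infty$. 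By contrast, the auxiliary weights satisfy the exact identity $P_{\phi,\ell}^\T w_{\phi,\ell} = L_{\phi,\ell}$.

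Subtracting these relations gives $P_{\phi,\ell}^\T (w_\ell - w_{\phi,\ell}) = \epsilon_\ell$. The key observation is that $P_{\phi,\ell} \to P_\Pi$ entrywise, because each entry is $\phi_{\alpha_j}^\ell(x_n) = \neper^{-\norm[0]{x_n}_2^2/(2\ell^2)} x_n^{\alpha_j} \to x_n^{\alpha_j} = [P_\Pi]_{nj}$ as $\ell \to \infty$. Since $X$ is $\Pi_m$-unisolvent, $P_\Pi$ and hence $P_\Pi^\T$ are non-singular, so by continuity of inversion on the open set of invertible matrices we obtain $(P_{\phi,\ell}^\T)^{-1} \to (P_\Pi^\T)^{-1}$; in particular these inverses are uniformly bounded for all large $\ell$. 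Multiplying through then yields $w_\ell - w_{\phi,\ell} = (P_{\phi,\ell}^\T)^{-1} \epsilon_\ell \to 0$, a bounded sequence of matrices applied to a null sequence being null.

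It remains to feed in the convergence of the auxiliary weights. The limit assumption $\lim_{\ell \to \infty} L[\phi_\alpha^\ell] = L[x^\alpha]$ is precisely $L_{\phi,\ell} \to L_\Pi$, and combined with $(P_{\phi,\ell}^\T)^{-1} \to (P_\Pi^\T)^{-1}$ it gives $w_{\phi,\ell} = (P_{\phi,\ell}^\T)^{-1} L_{\phi,\ell} \to (P_\Pi^\T)^{-1} L_\Pi = w_\textsm{pol}$, which is exactly the convergence already recorded in the proof of Lemma~\ref{lemma:weight-sum}. Writing $w_\ell = (w_\ell - w_{\phi,\ell}) + w_{\phi,\ell}$ and letting $\ell \to \infty$ then gives $\lim_{\ell \to \infty} w_\ell = w_\textsm{pol}$, as claimed. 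I do not expect a genuine obstacle here beyond bookkeeping: the one point requiring care is the uniform boundedness of $(P_{\phi,\ell}^\T)^{-1}$, which could fail only if $P_{\phi,\ell}$ approached a singular matrix, and this is ruled out precisely by the unisolvency of $X$ together with the entrywise limit $P_{\phi,\ell} \to P_\Pi$.
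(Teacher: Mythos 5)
Your proof is correct, and it takes a slightly different route from the paper's. The paper never inverts the $\ell$-dependent matrix: it bounds $\norm[0]{L_\Pi - P_\Pi^\T w_\ell}_2$ by the triangle inequality through the three vanishing terms $\norm[0]{L_\Pi - L_{\phi,\ell}}_2$, $\norm[0]{L_{\phi,\ell} - P_{\phi,\ell}^\T w_\ell}_2$, and $\norm[0]{(P_{\phi,\ell}-P_\Pi)^\T w_\ell}_2$, and then uses non-singularity of the fixed matrix $P_\Pi$ exactly once, via $L_\Pi - P_\Pi^\T w_\ell = P_\Pi^\T(w_\textsm{pol} - w_\ell)$. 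You instead subtract the exact system $P_{\phi,\ell}^\T w_{\phi,\ell} = L_{\phi,\ell}$, invoke continuity of matrix inversion at $P_\Pi^\T$ to get $(P_{\phi,\ell}^\T)^{-1} \to (P_\Pi^\T)^{-1}$, and decompose $w_\ell = (w_\ell - w_{\phi,\ell}) + w_{\phi,\ell}$, feeding in $w_{\phi,\ell} \to w_\textsm{pol}$ from the proof of Lemma~\ref{lemma:weight-sum}. The trade-off is instructive: the paper's version is shorter and works with a single fixed inverse, but its claim that the third term vanishes tacitly requires $(w_\ell)_{\ell>0}$ to be bounded, which the paper does not spell out; your argument supplies precisely this point, since boundedness of $w_\ell$ falls out of $w_\ell = (P_{\phi,\ell}^\T)^{-1}(L_{\phi,\ell} - \epsilon_\ell)$ with uniformly bounded inverses. (Alternatively, one can patch the paper's step directly by noting the factorisation $P_{\phi,\ell} = D_\ell P_\Pi$ with $D_\ell$ the diagonal matrix of entries $\neper^{-\norm[0]{x_n}_2^2/(2\ell^2)} \to 1$, which yields the same boundedness.) So your proposal is not merely a rephrasing: it is a marginally longer but more airtight perturbation argument, at the cost of invoking continuity of inversion on the general linear group rather than only non-singularity of $P_\Pi$.
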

\begin{proof} We have $P_\Pi^\T w_\Pi = L_\Pi$ and 
  \begin{equation*}
    \norm[0]{L_\Pi - P_\Pi^\T w_\ell}_2 \leq \norm[0]{L_\Pi - L_{\phi,\ell}}_2 + \norm[0]{L_{\phi,\ell} - P_{\phi,\ell}^\T w_\ell}_2 + \norm[0]{P_{\phi,\ell}^\T w_\ell - P_\Pi^\T w_\ell}_2,
  \end{equation*}
  where each of the terms on the right-hand side vanishes as $\ell \to \infty$. Because $\norm[0]{L_\Pi - P_\Pi^\T w_\ell}_2 = \norm[0]{P_\Pi^\T ( w_\textsm{pol} - w_\ell)}_2$ and $P_\Pi$ is non-singular, we conclude that $\lim_{\ell \to \infty} w_\ell = w_\textsm{pol}$.
  \qed
\end{proof}

We are ready to prove the main result of the article for a fixed $\Pi_m$-unisolvent point set $X \subset \Omega$ consisting of $N$ distinct points. First, by considering one of the basis functions~\eqref{eq:orthobasis} we show that $\abs[0]{L[\phi_\alpha^\ell] - Q_X(w_\ell^*)[\phi_\alpha^\ell]} \leq \sqrt{\alpha!} \ell^{\abs[0]{\alpha}} e_{\ell}(Q_X(w_\ell^*))$ for every $\alpha \in \N_0^d$. Second, the sub-optimal cubature rule $Q_X(w_{\phi,\ell})$ defined above can be used, in combination with~\eqref{eq:RKHS-expansion}, to establish the upper bound \sloppy{${e_{\ell}(Q_X(w_\ell^*)) \leq C \ell^{-(m+1)}}$}. These two bounds imply that $\abs[0]{L[\phi_\alpha^\ell] - Q_X(w_\ell^*)[\phi_\alpha^\ell]} \to 0$ for every $\abs[0]{\alpha} \leq m$. If $\lim_{\ell \to \infty} L[\phi_\alpha^\ell(x)] = L[x^\alpha]$, Lemma~\ref{lemma:convergence1} then implies that $w_\ell^* \to w_\textsm{pol}$.

\begin{theorem}\label{thm:unisolvent} Let $N = \dim \Pi_{m} $ for some $m \in \N_0$ and $X$ be $\Pi_{m}$-unisolvent. Suppose that $\lim_{\ell \to \infty}L[\phi_\alpha^\ell(x)] = L[x^\alpha]$ for every $\alpha \in \N_0^d$ such that $\abs[0]{\alpha} \leq m$ and that
\begin{equation}\label{eq:Lphi-growth}
L\Bigg[\sum_{ \abs[0]{\b{\alpha}} \geq m+1} \frac{\abs[0]{a_\alpha}}{\ell_0^{\abs[0]{\alpha}-(m+1)} \sqrt{\b{\alpha}!}} \abs[0]{x^\alpha} \Bigg] \leq C_L < \infty
\end{equation}
for some $\ell_0 > 1$ and any sequence $(a_\alpha)_{\alpha \in \N_0^d}$ such that $\sum_{\alpha \in \N_0^d} a_\alpha^2 \leq 1$. Then 
\begin{equation*}
\lim_{\ell \to \infty} \b{w}_\ell^* = \b{w}_\textsm{pol} \quad \text{ and } \quad e_{\ell}\big( Q_X(w_\ell^*) \big) = \mathcal{O}\big( \ell^{-(m+1)} \big),
\end{equation*}
where $w_\textsm{pol}$ are the weights of the unique polynomial cubature rule such that $Q_X(w_\textsm{pol})[p] = L[p]$ for every $p \in \Pi_m$.
\end{theorem}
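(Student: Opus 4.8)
The plan is to verify the hypothesis of Lemma~\ref{lemma:convergence1} for the optimal weights $w_\ell^*$ and then invoke that lemma to conclude $w_\ell^* \to w_\textsm{pol}$; the convergence rate falls out of the same estimates. Concretely, I would reduce everything to a single quantitative bound on the worst-case error $e_\ell(Q_X(w_\ell^*))$, sandwiching it between the per-basis-function errors (from below) and the error of the suboptimal rule $Q_X(w_{\phi,\ell})$ (from above).

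\emph{First step (lower side).} For each $\alpha$ the normalised function $b_\alpha^\ell = \frac{1}{\ell^{\abs[0]{\alpha}}\sqrt{\alpha!}}\phi_\alpha^\ell$ is exactly one of the orthonormal basis elements~\eqref{eq:orthobasis}, so $\norm[0]{b_\alpha^\ell}_{\rkhs(K_\ell)} = 1$. Feeding $b_\alpha^\ell$ into the definition~\eqref{eq:wce} of the worst-case error and using linearity of $L$ and of $Q_X(w_\ell^*)$ gives $\abs[0]{L[\phi_\alpha^\ell] - Q_X(w_\ell^*)[\phi_\alpha^\ell]} \leq \sqrt{\alpha!}\,\ell^{\abs[0]{\alpha}}\, e_\ell(Q_X(w_\ell^*))$ for every $\alpha \in \N_0^d$.

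\emph{Second step (upper side).} Since $w_\ell^*$ minimises the worst-case error, $e_\ell(Q_X(w_\ell^*)) \leq e_\ell(Q_X(w_{\phi,\ell}))$. To bound the right-hand side, take any $f = \sum_\alpha f_\alpha \phi_\alpha^\ell$ with $\norm[0]{f}_{\rkhs(K_\ell)} \leq 1$, i.e.\ $\sum_\alpha \ell^{2\abs[0]{\alpha}}\alpha! f_\alpha^2 \leq 1$, and set $a_\alpha = \ell^{\abs[0]{\alpha}}\sqrt{\alpha!}\,f_\alpha$ so that $\sum_\alpha a_\alpha^2 \leq 1$. Because $Q_X(w_{\phi,\ell})$ reproduces $\phi_\alpha^\ell$ for $\abs[0]{\alpha}\leq m$, the low-degree terms cancel and
\[
  L[f] - Q_X(w_{\phi,\ell})[f] = \sum_{\abs[0]{\alpha}\geq m+1} \frac{a_\alpha}{\ell^{\abs[0]{\alpha}}\sqrt{\alpha!}}\big( L[\phi_\alpha^\ell] - Q_X(w_{\phi,\ell})[\phi_\alpha^\ell]\big).
\]
Using $\abs[0]{\phi_\alpha^\ell(x)} \leq \abs[0]{x^\alpha}$ (the exponential factor is at most one), I would split this into an $L$-part and a cubature-part. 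For $\ell \geq \ell_0 > 1$ and $\abs[0]{\alpha}\geq m+1$ one has $\ell^{-\abs[0]{\alpha}} \leq \ell^{-(m+1)}\ell_0^{-(\abs[0]{\alpha}-(m+1))}$, so the $L$-part is controlled directly by assumption~\eqref{eq:Lphi-growth} and yields a factor $C_L\,\ell^{-(m+1)}$. For the cubature-part, Lemma~\ref{lemma:weight-sum} bounds $\sum_n\abs[0]{w_{\phi,\ell}(n)}$ uniformly by $C_{\ell_0}$, and with $R = \max_n\max_i\abs[0]{x_{n,i}}$ one has $\abs[0]{Q_X(w_{\phi,\ell})[\phi_\alpha^\ell]} \leq C_{\ell_0} R^{\abs[0]{\alpha}}$; a Cauchy--Schwarz step against $(a_\alpha)$ followed by the convergent series $\sum_\alpha (R/\ell_0)^{2\abs[0]{\alpha}}/\alpha! = \neper^{\,d(R/\ell_0)^2}$ then produces another $\mathcal{O}(\ell^{-(m+1)})$ bound. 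Taking the supremum over such $f$ gives $e_\ell(Q_X(w_\ell^*)) \leq e_\ell(Q_X(w_{\phi,\ell})) \leq C\,\ell^{-(m+1)}$, which is the asserted rate.

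Combining the two steps, for $\abs[0]{\alpha}\leq m$ we obtain $\abs[0]{L[\phi_\alpha^\ell] - Q_X(w_\ell^*)[\phi_\alpha^\ell]} \leq C\sqrt{\alpha!}\,\ell^{\abs[0]{\alpha}-(m+1)} \to 0$, since the exponent is strictly negative. This is precisely the hypothesis of Lemma~\ref{lemma:convergence1}, whence $w_\ell^* \to w_\textsm{pol}$. The main obstacle is the second step: justifying the interchange of $L$ with the infinite tail sum and guaranteeing its convergence uniformly in $\ell$ when $\Omega$ is unbounded. This is exactly what assumption~\eqref{eq:Lphi-growth} is engineered to supply for the $L$-part, while Lemma~\ref{lemma:weight-sum} together with the exponential-series estimate handles the finite cubature-part; everything else is bookkeeping.
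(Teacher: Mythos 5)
Your proposal is correct and follows essentially the same route as the paper's proof: the identical sandwich between the orthonormal-basis lower bound $\abs[0]{L[\phi_\alpha^\ell] - Q_X(w_\ell^*)[\phi_\alpha^\ell]} \leq \sqrt{\alpha!}\,\ell^{\abs[0]{\alpha}} e_\ell(Q_X(w_\ell^*))$ and the suboptimal rule $Q_X(w_{\phi,\ell})$, the same split of the tail into an $L$-part controlled by assumption~\eqref{eq:Lphi-growth} and a cubature part controlled by Lemma~\ref{lemma:weight-sum}, and the same conclusion via Lemma~\ref{lemma:convergence1} with $w_\ell = w_\ell^*$. The only (harmless, purely bookkeeping) deviation is in summing the cubature tail: you use Cauchy--Schwarz against $(a_\alpha)$ and the series $\sum_\alpha (R/\ell_0)^{2\abs[0]{\alpha}}/\alpha! = \neper^{d(R/\ell_0)^2}$, whereas the paper bounds $\abs[0]{a_\alpha} \leq 1$ and uses convergence of $\sum_{\abs[0]{\alpha} \geq m+1} \ell_0^{-(\abs[0]{\alpha}-(m+1))}/\sqrt{\alpha!}$ --- if anything, your variant tracks the growth of $\max_n \abs[0]{x_n^\alpha}$ in $\abs[0]{\alpha}$ slightly more carefully than the paper's uniform constant $C_X$.
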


\begin{proof} For every $\b{\alpha} \in \N_0^d$ select the function
\begin{equation*}
g_\alpha(\b{x}) = \frac{1}{\ell^{\abs[0]{\b{\alpha}}} \sqrt{\b{\alpha}!}} \neper^{-\norm[0]{\b{x}}_2^2/(2\ell^2)} \b{x}^{\b{\alpha}} = \frac{1}{\ell^{\abs[0]{\b{\alpha}}} \sqrt{\b{\alpha}!}} \phi_{\b{\alpha}}^\ell(\b{x}).
\end{equation*}
From Theorem~\ref{thm:gauss-rkhs} it follows that $\norm[0]{g_\alpha}_{\rkhs(K_\ell)}^2 = 1$ since $g_\alpha$ is one of the basis functions~\eqref{eq:orthobasis}. Thus, by definition of the worst-case error,
\begin{equation}\label{eq:lower}
\frac{1}{\ell^{\abs[0]{\b{\alpha}}} \sqrt{\b{\alpha}!}} \abs[1]{L[\phi_{\b{\alpha}}^\ell] - Q_X(w_\ell^*)[\phi_{\b{\alpha}}^\ell] } = \abs[1]{L[g_\alpha] - Q_X(w_\ell^*)[g_\alpha]} \leq e_{\ell}\big( Q_X(w_\ell^*) \big).
\end{equation}
Next we derive an appropriate upper bound on $e_{\ell}( Q_X(w_\ell^*) )$ by considering the unique sub-optimal cubature rule $Q_X(\b{w}_{\phi,\ell})$ that is exact for every $\phi_{\b{\alpha}}^\ell$ with $\abs[0]{\b{\alpha}} \leq m$. 
In the expansion~\eqref{eq:RKHS-expansion} of a function in $\rkhs(K_\ell)$ we have $L[\phi_\alpha^\ell] = Q_X(w_{\phi,\ell})[\phi_\alpha^\ell]$ for every term with $\abs[0]{\alpha} \leq m$. Consequently, the worst-case error admits the bound
\begin{equation*}\label{eq:upper-preliminary1}
\begin{split}
e_{\ell}\big( &Q_X(\b{w}_{\phi,\ell}) \big) \\
&= \sup_{\norm[0]{f}_{\rkhs(K_\ell)} \leq 1} \, \abs[4]{ L\Bigg[ \sum_{\abs[0]{\alpha} \geq m+1} f_\alpha \phi_\alpha^\ell \Bigg] - Q_X(w_{\phi,\ell})\Bigg[ \sum_{\abs[0]{\alpha} \geq m+1} f_\alpha \phi_\alpha^\ell \Bigg] } \\
&\leq \sup_{\norm[0]{f}_{\rkhs(K_\ell)} \leq 1} L\Bigg[ \sum_{\abs[0]{\alpha} \geq m+1} \abs[0]{f_\alpha} \abs[0]{\phi_\alpha^\ell} \Bigg] + \sup_{\norm[0]{f}_{\rkhs(K_\ell)} \leq 1} \, \abs[4]{ Q_X(w_{\phi,\ell})\Bigg[ \sum_{\abs[0]{\alpha} \geq m+1} f_\alpha \phi_\alpha^\ell \Bigg] },
\end{split}
\end{equation*}
where $f_{\b{\alpha}}$ are the coefficients that define $f \in \rkhs(K_\ell)$ in Theorem~\ref{thm:gauss-rkhs}.
A consequence of~\eqref{eq:RKHS-expansion} is that $\norm[0]{f}_{\rkhs(K_\ell)} \leq 1$ implies $\abs[0]{f_{\b{\alpha}}} \leq a_\alpha/(\ell^{\abs[0]{\b{\alpha}}} \sqrt{\b{\alpha}})$ for some real numbers $\abs[0]{a_\alpha} \leq 1$ such that $\sum_{\alpha \in \N_0^d} a_\alpha^2 \leq 1$. Therefore, for $\ell \geq \ell_0 > 1$,
\begin{equation*}\label{eq:upper-preliminary-L}
\begin{split}
\sup_{\norm[0]{f}_{\rkhs(K_\ell)} \leq 1} L\Bigg[ \sum_{\abs[0]{\alpha} \geq m+1} \abs[0]{f_\alpha} \abs[0]{\phi_\alpha^\ell} \Bigg] &\leq L\Bigg[\sum_{ \abs[0]{\b{\alpha}} \geq m+1} \frac{\abs[0]{a_\alpha}}{\ell^{\abs[0]{\b{\alpha}}} \sqrt{\b{\alpha}!}} \abs[0]{\phi_\alpha^\ell} \Bigg] \\
&\leq \ell^{-(m+1)} L\Bigg[\sum_{ \abs[0]{\b{\alpha}} \geq m+1} \frac{\abs[0]{a_\alpha}}{\ell_0^{\abs[0]{\alpha}-(m+1)} \sqrt{\b{\alpha}!}} \abs[0]{\phi_\alpha^\ell} \Bigg] \\
&\leq \ell^{-(m+1)} L\Bigg[\sum_{ \abs[0]{\b{\alpha}} \geq m+1} \frac{\abs[0]{a_\alpha}}{\ell_0^{\abs[0]{\alpha}-(m+1)} \sqrt{\b{\alpha}!}} \abs[0]{x^\alpha} \Bigg] \\
&\leq C_L \ell^{-(m+1)}
\end{split}
\end{equation*}
by assumption~\eqref{eq:Lphi-growth}. Moreover, because 
\begin{equation*}
\max_{n=1,\ldots,N} \, \abs[0]{\phi_{\b{\alpha}}^\ell(\b{x}_n)} \leq \max_{n=1,\ldots,N} \, \abs[0]{\b{x_n}^{\b{\alpha}}} \leq C_X  
\end{equation*}
for some $C_X > 0$ and every $\ell$, we have
\begin{align*}
\sup_{\norm[0]{f}_{\rkhs(K_\ell)} \leq 1} \, \abs[4]{ Q_X(w_{\phi,\ell})\Bigg[ \sum_{\abs[0]{\alpha} \geq m+1} f_\alpha \phi_\alpha^\ell \Bigg] } \hspace{-4cm} & \\
&\leq \sup_{\norm[0]{f}_{\rkhs(K_\ell)} \leq 1} \, \sum_{n=1}^N \abs[0]{ w_{\phi,\ell}(n) } \sum_{\abs[0]{\alpha} \geq m+1} \abs[0]{f_\alpha} \abs[0]{\phi_\alpha^\ell(x_n)} \\
&\leq \ell^{-(m+1)} \sum_{n=1}^N \abs[0]{ w_{\phi,\ell}(n) } \sum_{ \abs[0]{\b{\alpha}} \geq m+1} \frac{\abs[0]{a_\alpha}}{\ell^{\abs[0]{\alpha}-(m+1)} \sqrt{\b{\alpha}!}} \abs[0]{\phi_\alpha^\ell(x_n)} \\
&\leq \ell^{-(m+1)} \sum_{n=1}^N \abs[0]{ w_{\phi,\ell}(n) } \sum_{ \abs[0]{\b{\alpha}} \geq m+1} \frac{C_X}{\ell_0^{\abs[0]{\alpha}-(m+1)} \sqrt{\b{\alpha}!}} \\
&\leq \ell^{-(m+1)} \bigg( \sup_{\ell \geq \ell_0} \sum_{n=1}^N \abs[0]{ w_{\phi,\ell}(n) } \bigg) \sum_{ \abs[0]{\b{\alpha}} \geq m+1} \frac{C_X}{\ell_0^{\abs[0]{\alpha}-(m+1)} \sqrt{\b{\alpha}!}} \\
&\eqqcolon C_Q \ell^{-(m+1)}
\end{align*}
where $C_Q < \infty$ follows from convergence of the last term and Lemma~\ref{lemma:weight-sum}. 
Thus
\begin{equation} \label{eq:upper}
e_{\ell}\big(Q_X(w_{\phi,\ell})\big) \leq (C_L + C_Q) \ell^{-(m+1)} \eqqcolon C \ell^{-(m+1)}
\end{equation}
when $\ell \geq \ell_0$.
Since $Q_X(w_\ell^*)$ is worst-case optimal, we have thus established with~\eqref{eq:lower} and~\eqref{eq:upper} that, for sufficiently large $\ell$,
\begin{equation*}
\frac{1}{\ell^{\abs[0]{\b{\alpha}}} \sqrt{\b{\alpha}!}} \abs[1]{L[\phi_{\b{\alpha}}^\ell] - Q_X(w_\ell^*)[\phi_{\b{\alpha}}^\ell] } \leq e_{\ell}( Q_X(w_\ell^*) ) \leq e_{\ell}\big( Q_X(\b{w}_{\phi,\ell}) \big) \leq C \ell^{-(m+1)}
\end{equation*}
for every $\b{\alpha} \in \N_0^d$ such that $\abs[0]{\b{\alpha}} \leq m$ and a constant $C$ independent of $\ell$. That is,
\begin{equation}\label{eq:fixed-point-final}
\abs[1]{L[\phi_{\b{\alpha}}^\ell] - Q_X(w_\ell^*)[\phi_{\b{\alpha}}^\ell] } \leq C \sqrt{\alpha!} \, \ell^{-(m+1) + \abs[0]{\alpha}} \leq C \sqrt{m!} \, \ell^{-1} \to 0 \quad \text{ as } \quad \ell \to \infty.
\end{equation}
The claim then follows by setting $w_\ell = w_{\ell}^*$ in Lemma~\ref{lemma:convergence1}.
\qed
\end{proof}

Assumptions of Theorem~\ref{thm:unisolvent} hold, for instance, if the domain $\Omega$ is bounded.

\begin{corollary} \label{cor:unisolvent-bounded} Let $N = \dim \Pi_{m}$ for some $m \in \N_0$ and $X$ be $\Pi_{m}$-unisolvent. Suppose that $\Omega$ is bounded. Then 
\begin{equation*}
\lim_{\ell \to \infty} \b{w}_\ell^* = \b{w}_\textsm{pol} \quad \text{ and } \quad e_{\ell}\big( Q_X(w_\ell^*) \big) = \mathcal{O}\big( \ell^{-(m+1)} \big),
\end{equation*}
where $w_\textsm{pol}$ are the weights of the unique polynomial cubature rule such that $Q_X(w_\textsm{pol})[p] = L[p]$ for every $p \in \Pi_m$.
\end{corollary}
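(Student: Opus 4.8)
The plan is to deduce the corollary directly from Theorem~\ref{thm:unisolvent} by verifying that boundedness of $\Omega$ forces both of that theorem's hypotheses: the pointwise limit $\lim_{\ell\to\infty} L[\phi_\alpha^\ell(x)] = L[x^\alpha]$ for $\abs[0]{\alpha} \le m$, and the growth bound~\eqref{eq:Lphi-growth}. Since $\Omega$ is bounded, I would fix $R > 0$ with $\norm[0]{x}_2 \le R$ for all $x \in \Omega$; then $\abs[0]{x_i} \le R$ in each coordinate and hence $\abs[0]{x^\alpha} \le R^{\abs[0]{\alpha}}$ for every $\alpha \in \N_0^d$. Note also that the constant function $1$ is a polynomial, so $L[1] < \infty$ by the standing assumption on $L$.

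For the limit condition, I would start from $\abs[0]{\phi_\alpha^\ell(x) - x^\alpha} = \abs[0]{x^\alpha} \, (1 - \neper^{-\norm[0]{x}_2^2/(2\ell^2)})$ and use the elementary inequality $1 - \neper^{-t} \le t$ for $t \ge 0$ to obtain the pointwise bound $\abs[0]{\phi_\alpha^\ell(x) - x^\alpha} \le (R^2/(2\ell^2)) \abs[0]{x^\alpha}$ on $\Omega$. Positivity and linearity of $L$ then give $\abs[0]{L[\phi_\alpha^\ell] - L[x^\alpha]} \le L[\abs[0]{\phi_\alpha^\ell - x^\alpha}] \le (R^2/(2\ell^2)) \, L[\abs[0]{x^\alpha}]$, and since $L[\abs[0]{x^\alpha}] < \infty$ this tends to $0$ as $\ell \to \infty$. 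This settles the first hypothesis, in fact for all $\alpha$ rather than only $\abs[0]{\alpha} \le m$.

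The substance lies in the growth bound~\eqref{eq:Lphi-growth}, where the key point is an estimate uniform over every coefficient sequence $(a_\alpha)$ with $\sum_\alpha a_\alpha^2 \le 1$. Fixing any $\ell_0 > 1$, I would bound $\abs[0]{x^\alpha} \le R^{\abs[0]{\alpha}}$ and apply the Cauchy--Schwarz inequality to the sum over $\alpha$, separating the factor $\abs[0]{a_\alpha}$ from the rest:
\[
\sum_{\abs[0]{\alpha} \ge m+1} \frac{\abs[0]{a_\alpha}}{\ell_0^{\abs[0]{\alpha}-(m+1)} \sqrt{\alpha!}} R^{\abs[0]{\alpha}} \le \bigg( \sum_{\alpha \in \N_0^d} a_\alpha^2 \bigg)^{\!1/2} \bigg( \sum_{\alpha \in \N_0^d} \frac{R^{2\abs[0]{\alpha}}}{\ell_0^{2(\abs[0]{\alpha}-(m+1))} \alpha!} \bigg)^{\!1/2}.
\]
The first factor is at most $1$. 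For the second, writing $\ell_0^{-2(\abs[0]{\alpha}-(m+1))} R^{2\abs[0]{\alpha}} = \ell_0^{2(m+1)} (R^2/\ell_0^2)^{\abs[0]{\alpha}}$ and factoring the multi-index sum into a product of $d$ one-dimensional exponential series gives the finite closed form $\ell_0^{2(m+1)} \neper^{dR^2/\ell_0^2}$. Hence the displayed sum is bounded by a constant $M$ independent of $x \in \Omega$ and of $(a_\alpha)$. The same Cauchy--Schwarz estimate applied to the tails shows the series converges uniformly on $\Omega$, so it defines a continuous function bounded pointwise by $M$; monotonicity of the positive functional $L$ then bounds $L$ of the series by $M \, L[1] =: C_L < \infty$, which is exactly~\eqref{eq:Lphi-growth}. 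With both hypotheses in hand, Theorem~\ref{thm:unisolvent} yields the claim. The only genuine obstacle is this uniformity over $(a_\alpha)$, and it is precisely the factorial decay $1/\sqrt{\alpha!}$ coming from the Gaussian RKHS norm that renders the comparison series summable through the exponential identity.
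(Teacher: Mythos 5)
Your proposal is correct and takes essentially the same route as the paper: both deduce the corollary from Theorem~\ref{thm:unisolvent} by using boundedness of $\Omega$ to verify its two hypotheses, bounding $\abs[0]{x^\alpha}$ by a constant power $R^{\abs[0]{\alpha}}$ and reducing finiteness in~\eqref{eq:Lphi-growth} to $L[1] < \infty$. The only cosmetic differences are that you quantify the limit $L[\phi_\alpha^\ell] \to L[x^\alpha]$ via $1 - \neper^{-t} \leq t$ where the paper invokes uniform convergence of $\phi_\alpha^\ell$ to $x^\alpha$, and you control the coefficients by Cauchy--Schwarz (yielding the closed form $\ell_0^{2(m+1)} \neper^{d R^2/\ell_0^2}$) where the paper simply uses $\abs[0]{a_\alpha} \leq 1$ together with convergence of $\sum_{\abs[0]{\alpha} \geq m+1} \beta^\alpha / \big(\ell_0^{\abs[0]{\alpha}-(m+1)} \sqrt{\alpha!}\big)$ --- both estimates rest on the same factorial decay and are equally valid.
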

\begin{proof}
  On a bounded domain the convergence $\phi_\alpha^\ell(x) \to x^\alpha$ as $\ell \to \infty$ is uniform. Thus
  \begin{equation*}
    \abs[1]{L[x^\alpha] - L[\phi_\alpha^\ell]} \leq L[1] \sup_{x \in \Omega}\abs[0]{x^\alpha - \phi_\alpha^\ell(x)} \to 0
  \end{equation*}
  as $\ell \to \infty$ for every $\alpha \in \N_0^d$. Assumption \eqref{eq:Lphi-growth} is also satisfied:
\begin{equation*}
\begin{split}
L\Bigg[\sum_{ \abs[0]{\b{\alpha}} \geq m+1} \frac{\abs[0]{a_\alpha}}{\ell_0^{\abs[0]{\alpha}-(m+1)} \sqrt{\b{\alpha}!}} \abs[0]{x^\alpha} \Bigg] \leq L\Bigg[\sum_{ \abs[0]{\b{\alpha}} \geq m+1} \frac{\beta^\alpha}{\ell_0^{\abs[0]{\alpha}-(m+1)} \sqrt{\b{\alpha}!}} \Bigg] < \infty,
\end{split}
\end{equation*}
where $\beta = (b,\ldots,b) \in \R^d$ for $b = \sup_{z \in \Omega} \norm[0]{z}_2$ and finiteness follows from the assumption $L[1] < \infty$.
\qed
\end{proof}

However, boundedness of $\Omega$ is not necessary. Consider Gaussian integration:
\begin{equation*}
L[f] = \frac{1}{(2\pi)^{d/2}} \int_{\R^d} f(\b{x}) \neper^{-\norm[0]{x}_2^2/2} \dif \b{x} = \prod_{i=1}^d \Bigg[ \frac{1}{\sqrt{2\pi}} \int_\R f(\b{x}) \neper^{-x_i^2/2} \dif x_i \Bigg].
\end{equation*}
If $\alpha \in \N_0^d$ has an odd element, $L[\phi_\alpha^\ell] = L[x^\alpha] = 0$ for every $\ell > 0$ by symmetry. If $\alpha = 2\beta$ for some $\beta \in \N_0^d$ the convergence $L[\phi_\alpha^\ell(x)] \to L[x^\alpha]$ as $\ell \to \infty$ follows from the monotone convergence theorem.
To verify~\eqref{eq:Lphi-growth}, recall that the absolute moments of the standard Gaussian distribution are
\begin{equation*}
\begin{split}
L[\abs[0]{x^\alpha}] &= \pi^{-d/2} \prod_{i=1}^d 2^{\alpha_i/2} \Gamma\bigg(\frac{\alpha_i+1}{2}\bigg) \\
&= \Bigg[ \prod_{\alpha_i \text{ odd}} \pi^{-1/2} 2^{\alpha_i/2} \bigg(\frac{\alpha_i-1}{2}\bigg)! \Bigg] \times \Bigg[ \prod_{\alpha_i \text{ even}} (\alpha_i - 1)!! \Bigg],
\end{split}
\end{equation*}
where $\Gamma(\cdot)$ is the Gamma function. Because $(n-1)!! \leq \sqrt{n!}$ for any $n \in \N$ and 
\begin{equation*}
\pi^{-1/2} \frac{2^{n/2}}{\sqrt{n!}} \bigg( \frac{n-1}{2} \bigg)! = \pi^{-1/2} \frac{2^{n/2}}{\sqrt{n!}} \times \frac{(n-1)!!}{2^{(n-1)/2}} = \sqrt{\frac{2}{\pi}} \frac{(n-1)!!}{\sqrt{n!}} \leq \sqrt{\frac{2}{\pi}} \leq 1
\end{equation*}
if $n$ is odd, we have
\begin{equation*}
\frac{L[\abs[0]{x^\alpha}]}{\sqrt{\alpha!}} = \Bigg[ \prod_{\alpha_i \text{ odd}} \pi^{-1/2} \frac{2^{\alpha_i/2}}{\sqrt{\alpha_i!}} \bigg(\frac{\alpha_i-1}{2}\bigg)! \Bigg] \times \Bigg[ \prod_{\alpha_i \text{ even}} \frac{(\alpha_i - 1)!!}{\sqrt{\alpha_i!}} \Bigg] \leq 1.
\end{equation*}
Thus
\begin{equation*}
L\Bigg[\sum_{ \abs[0]{\b{\alpha}} \geq m+1} \frac{\abs[0]{a_\alpha}}{\ell_0^{\abs[0]{\alpha}-(m+1)} \sqrt{\b{\alpha}!}} \abs[0]{x^\alpha} \Bigg] \leq \sum_{ \abs[0]{\b{\alpha}} \geq m+1} \frac{1}{\ell_0^{\abs[0]{\alpha}-(m+1)} } < \infty
\end{equation*}
if $\ell_0 > 1$.

\section{Optimal points in one dimension} \label{sec:optimal}

Let $d=1$ and $\Omega = [a,b]$ for $a < b$. In this section we consider quadrature rules whose points are also selected so as to minimise the worst-case error. A kernel quadrature rule is \emph{optimal} if its points and weights satisfy
\begin{equation*}
e_\ell\big( Q_{X_\ell^*}(w_\ell^*) \big) = \inf_{ w \in \R^N, \, X \in \Omega^N} e_\ell\big( Q_X(w) \big).
\end{equation*}
In order to eliminate degrees of freedom in ordering the points we require that the points are in ascending order (i.e., $x_n \leq x_{n+1}$). Even though optimal kernel quadrature rules have been studied since the 1970s~\citep{BarrarLoebWerner1974,Bojanov1979,Larkin1970,Richter1970,RichterDyn1971a} for the integration functional $L[f] = \int_a^b f(x) \omega(x) \dif x$, $\omega(x) > 0$, their theory is still not complete (the main results have been recently collated by \citet[Section 5.1]{Oettershagen2017}). 
Although uniqueness results are been proved only for totally positive isotropic kernels of the form~\eqref{eq:kernel} and integration when $\omega \equiv 1$~\citep{BraessDyn1982}, there exists numerical evidence suggesting that the optimal rule is unique in more general settings~\citep[p.\@~97]{Oettershagen2017}.
Note that the Gaussian kernel~\eqref{eq:gauss-kernel} we consider is totally positive.

In Theorem~\ref{thm:optimal} we show that uniqueness of an optimal kernel quadrature rule for each $\ell > 0$ implies that its increasingly flat limit is $Q_\textsm{G} = Q_{X_\textsm{G}}(w_\textsm{G})$, the $N$-point \emph{Gaussian quadrature rule} for the linear functional $L$. This is the unique quadrature rule that is exact for every polynomial of degree at most $2N-1$: $Q_\textsm{G}[x^n] = L[x^n]$ whenever $n \leq 2N-1$. This degree of exactness is maximal; there are no $N$-point quadrature rules exact for all polynomials up to degree $2N$. The most familiar methods of this type are of course the classical Gaussian quadrature rules for numerical integration~\citep[Section~1.4]{Gautschi2004}. For example, the Gauss--Legendre quadrature rule satisfies
\begin{equation*}
  Q_\textsm{G}[p] = \int_{-1}^1 p(x) \dif x
\end{equation*}
for every polynomial $p$ of degree at most $2N-1$ and its points are the roots of the $N$th degree Legendre polynomial.
Theorem~\ref{thm:optimal} was conjectured by \mbox{\citet[Section 3.3]{OHagan1991}} in 1991 in the form that the optimal kernel quadrature rule has the classical Gauss--Hermite quadrature rule as its increasingly flat limit if the kernel is Gaussian and $L$ is the Gaussian integral.
More discussion of this conjecture---but no rigorous proofs---can be found in~\citep[Section~4]{Minka2000}.

The proof of Theorem~\ref{thm:optimal} is based on a general result by \citet{Barrow1978} on existence and uniqueness of \emph{generalised Gaussian quadrature rules}. This result replaces the polynomials in a Gaussian quadrature rule with \emph{generalised polynomials} formed out of functions that constitute an \emph{extended Chebyshev system}~\citep[Chapter~1]{KarlinStudden1966}. A collection $\{u_n\}_{n=0}^{m-1} \subset C^{m-1}([a,b])$ of functions is an extended Chebyshev system if any non-trivial linear combination of the functions has at most $m-1$ zeroes, counting multiplicities. That is, if $u \in \lspan(\{u_n\}_{n=0}^{m-1})$ and $u^{(q_p)}(x_p) = 0$ for $x_p \in [a,b]$, $p = 1,\ldots,P$, and $q_p = 0,\ldots,Q_p-1$, then $\sum_{p=1}^P Q_p \leq m-1$. Any basis of the space of polynomials of degree at most $m-1$ is an extended Chebyshev system. Importantly, the functions $\{\phi_n^\ell\}_{n=0}^{m-1}$ in~\eqref{eq:phi-functions} are an extended Chebyshev system for any $m \in \N$. To verify this, note that any $\phi \in \lspan(\{\phi_n^\ell\}_{n=0}^{m-1})$ can be written as $\phi(x) = \neper^{-x^2/(2\ell^2)} p(x)$ for some polynomial $p$ of degree at most $m-1$ and consequently
\begin{equation*}
  \phi^{(l)}(x) = \neper^{-x^2/(2\ell^2)} \bigg(\sum_{r=0}^{l-1} s_r(x) p^{(r)}(x) + p^{(l)}(x)\bigg)
\end{equation*}
for some polynomials $s_r$. From this expression we see that $\phi^{(l)}(x) = 0$ for every $l=0,\ldots,q$ if and only if $p^{(l)}(x) = 0$ for every $l=0,\ldots,q$. Since $p$ can have at most $m-1$ zeroes, counting multiplicities, it follows that the same is true of $\phi$.

\begin{theorem}[Barrow 1978] \label{thm:barrow} Let $\{u_n\}_{n=0}^{2N-1} \subset C^{2N-1}([a,b])$ be an extended Chebyshev system and $L$ a positive linear functional on $\lspan(\{u_n\}_{n=0}^{2N-1})$. Then there exist unique points \sloppy{${a < x_1 < \cdots < x_N < b}$} and positive weights $w \in \R_+^N$ such that
  \begin{equation*}
    Q_X(w)[u_n] = L[u_n] \quad \text{ for every } \quad n = 0, \ldots, 2N-1.
  \end{equation*}
\end{theorem}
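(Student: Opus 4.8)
The plan is to identify the rule sought in the theorem with the unique quadrature formula on $N$ nodes that reproduces $L$ on the entire $2N$-dimensional space $U = \lspan(\{u_n\}_{n=0}^{2N-1})$, and to split the argument into a short uniqueness part and a longer existence part. I expect existence to be the real obstacle: uniqueness is essentially linear algebra driven by the Haar property of an extended Chebyshev system, whereas existence forces one to produce $N$ \emph{interior} nodes with \emph{positive} weights, which requires genuine input beyond dimension counting.

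For uniqueness, suppose $Q_X(w)$ and $Q_Y(v)$ both satisfy the conclusion, with nodes $X,Y \subset (a,b)$ of size $N$ and positive weights. Their difference is a signed atomic measure $\sigma = \sum_i w(i)\delta_{x_i} - \sum_j v(j)\delta_{y_j}$, supported on at most $2N$ distinct points $z_1,\dots,z_r$ with net masses $m_1,\dots,m_r$, and it annihilates $U$, i.e. $\sum_k m_k u(z_k) = 0$ for every $u \in U$. Because $\{u_n\}_{n=0}^{2N-1}$ is an extended Chebyshev system, no nontrivial element of $U$ vanishes at $2N$ distinct points, so every generalised Vandermonde matrix built from $2N$ distinct nodes is nonsingular; extending $z_1,\dots,z_r$ to $2N$ distinct points then shows that the point evaluations $u \mapsto u(z_k)$ are linearly independent functionals on the $2N$-dimensional space $U$. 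Hence every $m_k = 0$, so $\sigma = 0$. Since a finite sum of strictly positive masses at distinct points determines both its nodes and its masses, this forces $X = Y$ and $w = v$.

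For existence I would pass to the language of moment spaces. Regard $c = (L[u_0],\dots,L[u_{2N-1}])$ as a point of the moment cone generated by the curve $t \mapsto (u_0(t),\dots,u_{2N-1}(t))$, $t \in [a,b]$; a positively weighted $N$-node rule reproducing $L$ on $U$ is exactly a representation of $c$ by a positive atomic measure. The first step is to show $c$ lies in the interior of this cone: were it on the boundary, a supporting hyperplane would furnish a nonzero $u \in U$ with $u \ge 0$ on $[a,b]$ and $L[u] = 0$, contradicting positivity of $L$. The second step invokes the Krein--Markov and Karlin--Studden theory of principal representations \citep{KarlinStudden1966}: an interior point of the $2N$-dimensional moment space of an extended Chebyshev system has exactly two principal representations, and the one whose nodes all lie in the open interval $(a,b)$ consists of precisely $N$ nodes carrying strictly positive masses. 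That representation is the desired rule, and the interiority of the nodes and positivity of the weights come directly from the structure theorem.

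The hard part is this existence machinery: rigorously establishing interiority of $c$ and, above all, the node count and positivity in the principal-representation theorem over a \emph{general} extended Chebyshev system (the index calculus and the classification of canonical representations). If one prefers to avoid importing this apparatus, the alternative is the route of \citet{Barrow1978} through topological degree: determine the weights as the unique solution of the $N\times N$ interpolation problem on $\{u_0,\dots,u_{N-1}\}$, so that the remaining $N$ exactness conditions become a map $F$ from the open node simplex $\{a<x_1<\dots<x_N<b\}$ into $\R^N$, and prove $\deg(F,\cdot,0) \ne 0$ by continuously deforming the system to the polynomial case, where the classical Gaussian rule is the known solution. The delicate points there are controlling $F$ as nodes coalesce or approach the endpoints, so that the degree is well defined and nonzero, and preserving the extended-Chebyshev property along the homotopy. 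In either approach, strict positivity of $L$ is precisely what keeps all $N$ nodes in the open interval rather than letting them escape to the boundary.
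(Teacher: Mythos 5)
The paper offers no proof of this statement at all: it is imported verbatim, with a citation, from \citet{Barrow1978}. So the only ``paper route'' to compare against is Barrow's original argument, which is exactly the topological-degree/homotopy method you sketch as a fallback (Barrow solves for the weights, views the remaining exactness conditions as a map on the open node simplex, and deforms the Chebyshev system to the polynomial one, where the classical Gaussian rule is the unique zero; his theorem in fact covers multiple nodes of even multiplicity, of which the simple-node statement used here is a special case). Your primary route --- uniqueness from the Haar property, existence from moment-cone interiority plus the Krein/Karlin--Studden theory of principal representations \citep{KarlinStudden1966} --- is the older classical proof (for simple nodes the result goes back to Krein) and is sound. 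The uniqueness paragraph is complete as written: the combined support of the two rules has at most $2N$ points, the extended Chebyshev property implies the ordinary Haar property (at most $2N-1$ distinct zeros for a nontrivial element of $U$), so point evaluations at any $2N$ distinct points are linearly independent functionals on the $2N$-dimensional space $U$, forcing the signed difference measure to vanish.

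Two small repairs are needed in the existence half. First, you cannot begin with ``were $c$ on the boundary of the cone'': the vector $c = (L[u_0],\dots,L[u_{2N-1}])$ is not known a priori to lie in the moment cone $\mathcal{M}$ at all. The correct formulation is: if $c \notin \operatorname{int} \mathcal{M}$, then, since $\mathcal{M}$ is a closed convex cone with nonempty interior (compactness of $[a,b]$, continuity of the $u_n$, and their linear independence), a separating hyperplane yields a nonzero $a^*$ with $\inprod{a^*}{c} \leq 0 \leq \inprod{a^*}{y}$ for all $y \in \mathcal{M}$; then $u = \sum_n a^*_n u_n$ satisfies $u \geq 0$ on $[a,b]$, $u \not\equiv 0$, and $L[u] \leq 0$, contradicting positivity of $L$ --- noting that ``positive'' here must be read strictly ($L[u] > 0$ for nonnegative $u \not\equiv 0$), the same strictness that, as you observe, keeps the nodes off the endpoints. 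Second, the appeal to principal representations should record the index bookkeeping explicitly: for a $2N$-dimensional Chebyshev system on $[a,b]$, an interior point of the moment space has exactly two principal representations, and it is the \emph{lower} one, of index $N$, whose nodes are $N$ distinct points of the open interval $(a,b)$ carrying strictly positive weights; its uniqueness is already part of the Karlin--Studden theorem, so your separate uniqueness argument is a harmless redundancy. With these repairs your plan is a complete and correct proof, and arguably more self-contained than the paper's bare citation, since Barrow's degree-theoretic machinery is really only needed for the multiple-node generalisation that this paper never uses.
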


\begin{lemma}\label{lemma:weight-sum-2} 
Let $\Omega \subset \R^d$ and suppose that a cubature rule $Q_X(w)$ with non-negative weights satisfies $Q_X(w)[u] = L[u]$ for some positive function \sloppy{${u \colon \Omega \to (0, \infty)}$} such that $0 < c_l \leq u(x) \leq c_u$ for all $x \in \Omega$. Then
  \begin{equation*}
    \max_{n = 1,\ldots,N} w(n) \leq \sum_{n=1}^N w(n) \leq L[1] \frac{c_u }{ c_l }.
  \end{equation*}
\end{lemma}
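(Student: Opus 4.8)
The plan is to handle the two inequalities separately, noting that the first is immediate and all the content lies in the second. Since the weights are non-negative, every term $w(n)$ is bounded above by the full sum $\sum_{n=1}^N w(n)$, which gives $\max_{n} w(n) \leq \sum_{n=1}^N w(n)$ at once. So the remaining task is the upper bound on the weight sum.

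For the upper bound, the key observation is that the exactness condition $Q_X(w)[u] = L[u]$ expands to $\sum_{n=1}^N w(n) u(x_n) = L[u]$. I would exploit the lower bound $u(x_n) \geq c_l > 0$ on the left: since $w(n) \geq 0$, each term satisfies $w(n) u(x_n) \geq c_l \, w(n)$, and summing yields
\begin{equation*}
  c_l \sum_{n=1}^N w(n) \leq \sum_{n=1}^N w(n) u(x_n) = L[u].
\end{equation*}
Dividing by $c_l$ then bounds the weight sum by $L[u]/c_l$.

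It remains to control $L[u]$ from above, and here I would use the upper bound $u(x) \leq c_u$ together with positivity of the functional $L$. Because $c_u - u$ is a non-negative function on $\Omega$, positivity gives $L[c_u - u] \geq 0$, i.e.\@ $L[u] \leq c_u L[1]$. Combining this with the previous display produces
\begin{equation*}
  \sum_{n=1}^N w(n) \leq \frac{L[u]}{c_l} \leq \frac{c_u L[1]}{c_l} = L[1]\,\frac{c_u}{c_l},
\end{equation*}
which is exactly the claimed bound.

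There is no genuine obstacle in this argument; it is a short two-sided sandwiching that pairs the lower bound on $u$ at the nodes (to pull $c_l$ out of the sum) with the upper bound on $u$ over all of $\Omega$ (to invoke monotonicity of $L$). The only points requiring care are that the weights are non-negative, so the inequality $w(n)u(x_n) \geq c_l w(n)$ does not flip, and that $L$ is a positive functional with $L[1] < \infty$, which is guaranteed by the standing assumptions on $L$.
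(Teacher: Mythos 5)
Your proof is correct and follows the same argument as the paper's: the chain $c_l \sum_{n=1}^N w(n) \leq \sum_{n=1}^N w(n) u(x_n) = L[u] \leq L[1]\, c_u$, with the first inequality from $u(x_n) \geq c_l$ and non-negativity of the weights, and the last from positivity of $L$. You merely spell out the steps (including the trivial $\max_n w(n) \leq \sum_n w(n)$) in more detail than the paper's one-line display.
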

\begin{proof} The claim follows immediately from the inequalities
  \begin{equation*}
   c_l \sum_{n=1}^N w(n) \leq \inf_{ x \in \Omega} u(x) \sum_{n=1}^N w(n) \leq \sum_{n=1}^N w(n) u(x_n) = L[u] \leq L[1] c_u . 
  \end{equation*}
  \qed
\end{proof}

\begin{lemma} \label{lemma:convergence2} Let $A$ be a metric space, $\ell_0 > 0$ a constant, and \sloppy{${g \colon [\ell_0, \infty) \times A \to [0,\infty)}$} a function. If there is a continuous function \sloppy{${g_\infty \colon A \to [0, \infty)}$} such that \sloppy{${g(\ell, \cdot) \to g_\infty}$} uniformly as $\ell \to \infty$ and a unique minimiser $x_\infty^*$ for which $g_\infty(x_\infty^*) = 0$, then a function $z \colon [\ell_0, \infty) \to A$ such that $\lim_{\ell \to \infty} g(\ell, z(\ell)) = 0$ has $\lim_{\ell \to \infty} z(\ell) = x_\infty^*$.
\end{lemma}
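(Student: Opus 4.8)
The plan is to prove the statement in two stages: first transfer the convergence hypothesis from $g$ to the limit function $g_\infty$, and then use the unique-minimiser property together with continuity of $g_\infty$ to upgrade $g_\infty(z(\ell)) \to 0$ into $z(\ell) \to x_\infty^*$. Throughout I write $\rho$ for the metric on $A$, and I note at the outset that since $g_\infty \geq 0$ and $g_\infty(x_\infty^*) = 0$, the infimum of $g_\infty$ is $0$ and uniqueness of the minimiser means $g_\infty(x) > 0$ for every $x \neq x_\infty^*$.

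For the first stage I would bound, using the triangle inequality and uniform convergence,
\[
g_\infty(z(\ell)) \le \abs[0]{g_\infty(z(\ell)) - g(\ell, z(\ell))} + g(\ell, z(\ell)) \le \sup_{x \in A} \abs[0]{g_\infty(x) - g(\ell,x)} + g(\ell, z(\ell)).
\]
The first term vanishes as $\ell \to \infty$ because $g(\ell, \cdot) \to g_\infty$ uniformly and the second by the hypothesis on $z$, whence $g_\infty(z(\ell)) \to 0$. It is here that uniform, rather than merely pointwise, convergence is essential, since the argument $z(\ell)$ moves with $\ell$.

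For the second stage I would argue by contradiction through a subsequence. Suppose $z(\ell) \not\to x_\infty^*$; then there are $\epsilon > 0$ and $\ell_k \to \infty$ with $\rho(z(\ell_k), x_\infty^*) \ge \epsilon$. Extracting a convergent subsequence $z(\ell_{k_j}) \to x^* \in A$, continuity of $g_\infty$ yields $g_\infty(z(\ell_{k_j})) \to g_\infty(x^*)$. Since $\rho(x^*, x_\infty^*) \ge \epsilon$ forces $x^* \neq x_\infty^*$, we have $g_\infty(x^*) > 0$; but the first stage gives $g_\infty(z(\ell_{k_j})) \to 0$, a contradiction. An equivalent, contradiction-free phrasing sets $m(\epsilon) = \inf\Set{g_\infty(x)}{\rho(x,x_\infty^*) \ge \epsilon}$ and uses $m(\epsilon) > 0$ to conclude that $g_\infty(z(\ell)) \to 0$ forces $\rho(z(\ell), x_\infty^*) < \epsilon$ for all large $\ell$.

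The main obstacle is precisely the extraction of the convergent subsequence (equivalently, the positivity $m(\epsilon) > 0$): in a bare metric space this can fail, and the conclusion is then false, as the example $A = \R$ with $g_\infty(x) = x^2/(1+x^4)$ shows, this being continuous with unique zero at $0$ yet vanishing at infinity, so that $z(\ell) = \ell$ has $g_\infty(z(\ell)) \to 0$ while $z(\ell) \to \infty$. I therefore expect the proof to require $A$ compact (or $z$ to have precompact range), under which $m(\epsilon) > 0$ holds automatically. This is exactly the situation in the intended application: the quadrature points lie in the compact cube $\Omega^N = [a,b]^N$, and by Lemma~\ref{lemma:weight-sum-2} the admissible weights are uniformly bounded, so the relevant parameter set is compact.
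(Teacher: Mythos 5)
Your first stage is exactly the paper's argument: the same triangle-inequality bound together with uniformity of the convergence $g(\ell,\cdot) \to g_\infty$ yields $g_\infty(z(\ell)) \to 0$. For the second stage, however, the paper offers only the one-sentence assertion that continuity, non-negativity, and uniqueness of the minimiser imply $z(\ell) \to x_\infty^*$, with no further justification. You are right to be suspicious of that step: your counterexample ($A = \R$, $g(\ell,x) = g_\infty(x) = x^2/(1+x^4)$, $z(\ell) = \ell$) is valid and shows the lemma is false as literally stated --- in a non-compact metric space a continuous non-negative function with a unique zero can still vanish along a sequence escaping to infinity. So the gap you identify is real, but it lies in the paper's statement and proof rather than in your proposal: some compactness hypothesis (compact $A$, compact sublevel sets of $g_\infty$, or precompact range of $z$) is genuinely needed, and your formulation via $m(\epsilon) = \inf\Set{g_\infty(x)}{\rho(x,x_\infty^*) \geq \epsilon} > 0$ is the cleanest way to finish once it is in place; your subsequence variant equally needs sequential compactness, as you note.

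Two remarks on your repair in the intended application (Theorem~\ref{thm:optimal}, where $A = \Omega^N \times [0,\infty)^N$). First, non-negativity of the optimal weights $w_\ell^*$ is itself an input --- it comes from the total-positivity theory the paper alludes to --- since $A$ restricts the weights to $[0,\infty)^N$. Second, Lemma~\ref{lemma:weight-sum-2} as stated applies to a rule that satisfies $Q_X(w)[u] = L[u]$ exactly, which $Q_\ell^*$ does not; it is only asymptotically exact on $\phi_0^\ell$. The argument nevertheless adapts: taking $n=0$ in \eqref{eq:optimal-conv} gives $Q_\ell^*[\phi_0^\ell] \leq L[\phi_0^\ell] + o(1) \leq L[1] + o(1)$, and since $\phi_0^\ell(x) = \neper^{-x^2/(2\ell^2)} \geq \neper^{-c^2/(2\ell_0^2)}$ on $[a,b]$ with $c = \max\{\abs[0]{a},\abs[0]{b}\}$ for all $\ell \geq \ell_0$, the weights $w_\ell^*$ are uniformly bounded for large $\ell$; hence $z$ eventually takes values in a compact subset $\Omega^N \times [0,C]^N$ and your $m(\epsilon)$ argument closes. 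In summary: your proof follows the paper's route in stage one, is strictly more careful than the paper's own proof in stage two, and correctly diagnoses the missing compactness hypothesis together with how to supply it in the application.
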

\begin{proof}
  The inequality $g_\infty(z(\ell)) \leq g(\ell, z(\ell)) + \abs[0]{ g_\infty(z(\ell)) - g(\ell, z(\ell)) }$ shows that $g_\infty(z(\ell)) \to 0$ since $g(\ell, z(\ell)) \to 0$ by assumption and $\abs[0]{ g_\infty(z(\ell)) - g(\ell, z(\ell)) } \to 0$ by uniformity of the convergence $g(\ell, \cdot) \to g_\infty$. Because $g_\infty$ is continuous, non-negative, and has a unique minimiser $x_\infty^*$, this implies that $z(\ell) \to x_\infty^*$.
  \qed
\end{proof}

\begin{theorem} \label{thm:optimal} Suppose that $\Omega = [a,b]$ for $a < b$. If for every $\ell > 0$ there exists a unique optimal kernel quadrature rule $Q_\ell^* = Q_{X_\ell^*}(w_\ell^*)$, then its points and weights converge to those of the $N$-point Gaussian quadrature rule for $L$:
  \begin{equation*}
    \lim_{\ell \to \infty} X_\ell^* = X_\textsm{G} \quad \text{ and } \quad \lim_{\ell \to \infty} w_\ell^* = w_\textsm{G},
  \end{equation*}
  where $X_\textsm{G}$ and $w_\textsm{G}$ are the unique points and weights such that $Q_{X_\textsm{G}}(w_\textsm{G})[x^n] = L[x^n]$ for every $0 \leq n \leq 2N-1$.
Moreover, $e_{\ell}(Q_\ell^*) = \mathcal{O}(\ell^{-2N})$.
\end{theorem}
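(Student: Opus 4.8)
The plan is to reduce the entire statement to the convergence criterion of Lemma~\ref{lemma:convergence2}, applied on the compact metric space
\[
A = \Set{(X,w)}{a \le x_1 \le \cdots \le x_N \le b,\ \abs[0]{w(k)} \le W \text{ for all } k},
\]
for a constant $W$ fixed later, with the limiting objective $g_\infty(X,w) = \sum_{n=0}^{2N-1}\big(L[x^n] - Q_X(w)[x^n]\big)^2$ and its finite-$\ell$ counterpart $g(\ell,X,w) = \sum_{n=0}^{2N-1}\big(L[\phi_n^\ell] - Q_X(w)[\phi_n^\ell]\big)^2$. The merit of this choice is that $g_\infty$ is continuous, non-negative, and vanishes exactly when $Q_X(w)$ is exact on $\Pi_{2N-1}$; by the uniqueness half of Gaussian quadrature (equivalently Theorem~\ref{thm:barrow} for the monomials, noting that a rule with fewer than $N$ distinct nodes cannot be exact beyond degree $2N-3$) this happens only at $(X_\textsm{G},w_\textsm{G})$. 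Thus $g_\infty$ has a unique minimiser, of value $0$. Uniqueness of $Q_\ell^* = Q_{X_\ell^*}(w_\ell^*)$ is used exactly to make $z(\ell) = (X_\ell^*, w_\ell^*)$ a well-defined function of $\ell$.

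The key technical ingredient is the series representation
\[
e_\ell\big(Q_X(w)\big)^2 = \sum_{n=0}^\infty \frac{1}{\ell^{2n} n!} \big( L[\phi_n^\ell] - Q_X(w)[\phi_n^\ell]\big)^2,
\]
obtained by expanding the error functional in the orthonormal basis~\eqref{eq:orthobasis} of Theorem~\ref{thm:gauss-rkhs}. First I would use it to prove the rate. Applying Theorem~\ref{thm:barrow} to the extended Chebyshev system $\{\phi_n^\ell\}_{n=0}^{2N-1}$ yields a rule $\hat Q_\ell$ with \emph{positive} weights that is exact for $\phi_0^\ell,\ldots,\phi_{2N-1}^\ell$; Lemma~\ref{lemma:weight-sum-2} applied with $u = \phi_0^\ell$ (which is bounded below by a positive constant on $[a,b]$ for $\ell \ge 1$, and above by $1$) bounds its weight sum uniformly in $\ell$. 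Repeating the tail estimate from the proof of Theorem~\ref{thm:unisolvent} verbatim with $m = 2N-1$ then gives $e_\ell(\hat Q_\ell) = \mathcal{O}(\ell^{-2N})$, so by optimality $e_\ell(Q_\ell^*) \le e_\ell(\hat Q_\ell) = \mathcal{O}(\ell^{-2N})$, which is the stated rate.

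Feeding this rate back into the representation shows that the optimal rule becomes asymptotically exact on the low-order functions: for each $n \le 2N-1$,
\[
\big(L[\phi_n^\ell] - Q_\ell^*[\phi_n^\ell]\big)^2 \le \ell^{2n} n!\, e_\ell(Q_\ell^*)^2 \le C n!\, \ell^{2n-4N},
\]
and summing over $n \le 2N-1$ gives $g(\ell, X_\ell^*, w_\ell^*) = \mathcal{O}(\ell^{-2}) \to 0$. To place $(X_\ell^*, w_\ell^*)$ inside a fixed $A$, I still need an a priori bound on the optimal weights. Here I would invoke positivity of the worst-case optimal weights for the totally positive Gaussian kernel; granted positivity, the $n=0$ instance above (so $Q_\ell^*[\phi_0^\ell] \to L[1]$) together with the lower bound on $\phi_0^\ell$ bounds $\sum_k w_\ell^*(k)$ exactly as in Lemma~\ref{lemma:weight-sum-2}, fixing an admissible $W$.

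Finally, $g(\ell,\cdot) \to g_\infty$ uniformly on $A$, since for $(X,w)\in A$ one has $\abs[0]{Q_X(w)[\phi_n^\ell] - Q_X(w)[x^n]} \le C_{N,W,a,b}\, \sup_{x\in[a,b]} \abs[0]{\neper^{-x^2/(2\ell^2)} - 1} \to 0$, and likewise $L[\phi_n^\ell] \to L[x^n]$. With $g_\infty$ continuous and uniquely minimised at $(X_\textsm{G},w_\textsm{G})$ with value $0$, and $g(\ell, X_\ell^*, w_\ell^*) \to 0$, Lemma~\ref{lemma:convergence2} delivers $X_\ell^* \to X_\textsm{G}$ and $w_\ell^* \to w_\textsm{G}$. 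I expect the main obstacle to be precisely the a priori boundedness of the optimal weights: without positivity (or some equivalent consequence of total positivity), negative weights could in principle diverge while cancelling, and neither the rate nor the asymptotic exactness alone excludes this. Establishing positivity of the optimal weights is therefore the crux on which the compactness argument rests.
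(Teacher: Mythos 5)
Your proposal is correct and, in its main lines, it is the paper's own proof: the lower bound obtained from the orthonormal basis~\eqref{eq:orthobasis} (your full Parseval identity is more than is needed; only the one-sided bound, the paper's~\eqref{eq:lower}, gets used), the auxiliary rule $\hat{Q}_\ell$ from Theorem~\ref{thm:barrow} applied to the extended Chebyshev system $\{\phi_n^\ell\}_{n=0}^{2N-1}$, the tail estimate of Theorem~\ref{thm:unisolvent} repeated with $m = 2N-1$ and Lemma~\ref{lemma:weight-sum-2} applied to $u = \phi_0^\ell$, the resulting rate $e_\ell(Q_\ell^*) \leq e_\ell(\hat{Q}_\ell) = \mathcal{O}(\ell^{-2N})$, and the finish via Lemma~\ref{lemma:convergence2} applied to a sum of the $2N$ low-order discrepancies (squares versus absolute values is immaterial). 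Where you genuinely depart from the paper is the choice of the domain $A$: the paper invokes Lemma~\ref{lemma:convergence2} on the non-compact set $A = \Omega^N \times [0,\infty)^N$ and asserts that $g(\ell,\cdot) \to g_\infty$ uniformly there, whereas you cut $A$ down to a compact set by capping the weights, which forces you to produce an a priori bound on $w_\ell^*$. Your instinct is sound and in fact sharper than the paper on this point: uniform convergence fails on unbounded weight sets, since $\abs[0]{Q_X(w)[\phi_n^\ell] - Q_X(w)[x^n]}$ scales with $\sum_{k} \abs[0]{w(k)}$, and the conclusion of Lemma~\ref{lemma:convergence2} itself tacitly requires compactness or coercivity of $g_\infty$ (a continuous non-negative function on a non-compact space can have a unique zero yet vanish along an unbounded sequence). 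Note, moreover, that the paper's very choice $A = \Omega^N \times [0,\infty)^N$ already presupposes $w_\ell^* \geq 0$, so the positivity you flag as the crux is assumed implicitly there as well; it is available for the totally positive Gaussian kernel, positivity of the weights of optimal rules being part of the classical theory collated in \citep[Section~5.1]{Oettershagen2017}, which the paper cites in the same breath as uniqueness. Granting it, your bound on $\sum_k w_\ell^*(k)$ via the near-exactness $Q_\ell^*[\phi_0^\ell] \to L[1]$ and the uniform lower bound on $\phi_0^\ell$ over $[a,b]$ for $\ell \geq \ell_0$ is a correct adaptation of Lemma~\ref{lemma:weight-sum-2} (the lemma itself assumes exactness, but boundedness of $Q_\ell^*[\phi_0^\ell]$ is all the argument needs), and your exclusion of coalescent-node zeros of $g_\infty$ (a rule with at most $N-1$ distinct nodes cannot be exact on $\Pi_{2N-1}$) correctly closes the uniqueness step that the paper leaves to the reader. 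In short: the same route as the paper, finished with a compactness patch that the paper's own application of Lemma~\ref{lemma:convergence2} arguably needs.
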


\begin{proof} In a manner identical to the proof of Theorem~\ref{thm:unisolvent}, we establish the lower bound
\begin{equation*}
\frac{1}{\ell^{n} \sqrt{n!}} \abs[1]{L[\phi_n^\ell] - Q_\ell^*[\phi_n^\ell]} \leq e_{\ell}(Q_{\ell}^*)
\end{equation*}
that holds for every $n \geq 0$. Because $\{\phi_n^\ell\}_{n=0}^{2N-1}$ are an extended Chebyshev system, Theorem~\ref{thm:barrow} guarantees the existence of a unique $N$-point quadrature rule \sloppy{${Q_\textsm{G}^\ell = Q_{X_\textsm{G}^\ell}( w_\textsm{G}^\ell)}$} such that $Q_\textsm{G}^\ell[\phi_n^\ell] = L[\phi_n^\ell]$ for every $n \leq 2N-1$. The points $X_\textsm{G}^\ell = \{x_1^{\textsm{G},\ell}, \ldots x_N^{\textsm{G},\ell}\}$ of this rule are distinct and lie inside $\Omega$ and the weights $w_\textsm{G}^\ell$ are positive. We can then replicate the rest of the proof of Theorem~\ref{thm:unisolvent} in one dimension but with $m=2N-1$ and Lemma~\ref{lemma:weight-sum} replaced with Lemma~\ref{lemma:weight-sum-2} (applied to the function $u = \phi_0^\ell$) to show that, for sufficiently large $\ell$ and a constant $C$ independent of $\ell$,
\begin{equation*}
\frac{1}{\ell^n \sqrt{n!}} \abs[1]{L[\phi_n^\ell] - Q_{\ell}^*[\phi_n^\ell]} \leq e_{\ell}(Q_{\ell}^*) \leq e_{\ell} (Q_\textsm{G}^\ell) \leq C \ell^{-2N}
\end{equation*}
for every $n \leq 2N-1$. Consequently,
\begin{equation} \label{eq:optimal-conv}
\abs[1]{L[\phi_n^\ell] - Q_{\ell}^*[\phi_n^\ell]} \leq C \sqrt{n!} \, \ell^{n-2N} \leq C \sqrt{(2N-1)!} \, \ell^{-1} \to 0 \quad \text{ as } \quad \ell \to \infty
\end{equation}
for every $n \leq 2N-1$. We then fix $\ell_0 > 0$ and invoke Lemma~\ref{lemma:convergence2} with the function
\begin{equation*}
g\big(\ell, (X,w) \big) = \sum_{ n = 0}^{2N-1} \abs[1]{ L[\phi_n^\ell] - Q_X(w)[\phi_n^\ell] },
\end{equation*}
domain $A = (\Omega^N \times [0, \infty)^N)$, and $z(\ell) = (X_\ell^*, w_{\ell}^*)$.
Because the domain $\Omega = [a,b]$ is bounded, $\lim_{\ell \to \infty} L[\phi_n^\ell] \to L[x^n]$ for every $n \in \N_0$. Thus
\begin{equation*}
  g\big(\ell, (X,w) \big) \to g_\infty((X,w)) \coloneqq \sum_{ n = 0}^{2N-1}  \abs[1]{ L[x^n] - Q_X(w)[x^n] } \quad \text{ as } \quad \ell \to \infty
\end{equation*}
uniformly on $A$. Since the unique minimiser of $g_\infty$ is $(X_\textsm{G}, w_\textsm{G})$, the claim follows from~\eqref{eq:optimal-conv} and Lemma~\ref{lemma:convergence2}.
\qed
\end{proof}

\section{Generalisations} \label{sec:generalisations}

This section discusses some straightforward generalisations of the results in Sections~\ref{sec:fixed} and~\ref{sec:optimal}.

\subsection{Damped power series kernels}

Theorem~\ref{thm:gauss-rkhs} for the Gaussian kernel~\eqref{eq:gauss-kernel} is a consequence of the identity
\begin{equation*}
  \begin{split}
    K_\ell(x,x') &= \neper^{-\norm[0]{x}^2/(2\ell^2)} \neper^{-\norm[0]{x'}^2/(2\ell^2)} \sum_{ \alpha \in \N_0^d} \frac{1}{\alpha! \ell^{2\abs[0]{\alpha}}} x^\alpha (x')^\alpha \\
    &\eqqcolon \neper^{-\norm[0]{x}^2/(2\ell^2)} \neper^{-\norm[0]{x'}^2/(2\ell^2)} K_\ell^\textsm{pow}(x,x'),
    \end{split}
\end{equation*}
where $K^\textsm{pow}_\ell(x,x')$ is a \emph{power series kernel}~\citep{Zwicknagl2009}. Accordingly, the results in Sections~\ref{sec:fixed} and~\ref{sec:optimal} can be generalised for a class of kernels that we call \emph{damped power series kernels}. Let $G \colon \R^d \to \R \setminus \{0\}$ be a non-zero function and define $G_\ell(x) = G(\norm[0]{x}/\ell)$. Then a damped power series kernel is
\begin{equation} \label{eq:damped-power-series}
  K_\ell(x,x') = G_\ell(x) G_\ell(x') \sum_{ \alpha \in \N_0^d} \frac{\omega_\alpha}{(\alpha!)^2 \ell^{q \abs[0]{\alpha}}} x^\alpha (x')^\alpha
\end{equation}
for $q > 0$ and weight parameters $\omega_\alpha > 0$ such that the series converges for any $\ell > 0$ and $x,x' \in \Omega$. Arguments identical to those used in~\citep{Minh2010,Zwicknagl2009} establish that $K_\ell$ is a positive-definite kernel and that its RKHS $\rkhs(K_\ell)$ consists of functions
\begin{equation*}
  f(x) = G_\ell(x) \sum_{ \alpha \in \N_0^d} f_\alpha x^\alpha \quad \text{ such that } \quad \norm[0]{f}_{\rkhs(K_\ell)}^2 = \sum_{ \alpha \in \N_0^d} \frac{(\alpha!)^2 \ell^{q \abs[0]{\alpha}}}{\omega_\alpha} f_\alpha^2 < \infty.
\end{equation*}
The Gaussian kernel is recovered by setting $G(x) = \neper^{-\norm[0]{x}_2^2/2}$, $q = 2$, and $\omega_\alpha = \alpha!$. Note that the Gaussian kernel is an exception; damped power series kernels are rarely stationary.

Denote $\psi_\alpha^\ell(x) = G_\ell(x) x^\alpha$. If we assume that (i) $G$ is bounded, (ii) \sloppy{${\lim_{\ell \to \infty} L[ \psi_\alpha^\ell(x) ] \to L[x^\alpha]}$} for every $\alpha \in \N_0^d$, and (iii) a summability condition analogous to~\eqref{eq:Lphi-growth} holds, then a generalisation for damped power series kernels of Theorem~\ref{thm:unisolvent} is readily obtained. To generalise Theorem~\ref{thm:optimal} we also need to assume that $\{\psi_n\}_{n=0}^{2N-1}$ constitutes an extended Chebyshev system.

\subsection{Taylor space kernels}

Let $d = 1$. \emph{Taylor space kernels}~\citep{Dick2006,ZwicknaglSchaback2013} are obtained by selecting $G \equiv 1$ in~\eqref{eq:damped-power-series}.
As $\ell \to \infty$, the corresponding kernel quadrature rules then converge to polynomial rules. Perhaps the two most interesting special cases are the exponential kernel
\begin{equation*}
  K_\ell(x,x') = \exp\bigg(\frac{xx'}{\ell}\bigg) = \sum_{n=0}^\infty \frac{(xx')^n}{\ell^n n!}
\end{equation*}
and the Szeg\H{o} kernel
\begin{equation*}
  K_\ell(x,x') = \frac{\ell^2}{\ell^2 - xx'} = \frac{1}{1 - \ell^{-2} xx'} = \sum_{n=0}^\infty \ell^{-2n} (xx')^n.
\end{equation*}
The Szeg\H{o} kernel induces a Hardy space on a disk of radius $\ell$. Interestingly, it has been pointed out already in the 1970s that approximation with the Szeg\H{o} kernel yields polynomial methods as $\ell \to \infty$~\citep[Section~3]{Larkin1970}. See also~\citep[Section 4]{Minka2000}. An extensive numerical investigation has been recently published by \citet[Section~6.2]{Oettershagen2017}.

\subsection{General information functionals}

It would also be easy to replace the cubature rule~\eqref{eq:cubature} with a generalised version
\begin{equation*}
  Q[f] = \sum_{n=1}^N w(n) L_n[f],
\end{equation*}
where $L_n$ are any bounded linear functionals. If $L_n$ are such that the matrices
\begin{equation*}
  \begin{bmatrix} L_1[x^{\alpha_1}] & \cdots & L_1[x^{\alpha_N}] \\ \vdots & \ddots & \vdots \\ L_N[x^{\alpha_1}] & \cdots & L_N[x^{\alpha_N}] \end{bmatrix} \quad \text{ and } \quad \begin{bmatrix} L_1[\phi_{\alpha_1}^\ell(x)] & \cdots & L_1[\phi_{\alpha_N}^\ell(x)] \\ \vdots & \ddots & \vdots \\ L_N[\phi_{\alpha_1}^\ell(x)] & \cdots & L_N[\phi_{\alpha_N}^\ell(x)] \end{bmatrix},
\end{equation*}
which are generalisations of~\eqref{eq:vandermonde} and~\eqref{eq:vandermonde-general}, are non-singular, then Theorem~\ref{thm:unisolvent} and Corollary~\ref{cor:unisolvent-bounded} can be generalised.

\subsection{Non-unisolvent point sets}

If the kernel is Gaussian but point set $X \subset \Omega$ is not unisolvent, \citet{Schaback2005} has proved that the kernel interpolant~\eqref{eq:lagrange-form} converges the de Boor and Ron polynomial interpolant~\citep{deBoor1994,deBoorRon1992a}, which is the unique interpolant to $f$ at $X$ in a point-dependent polynomial space $\Pi_X$ having in a certain sense minimal degree.
  We expect that extensions for non-unisolvent points of the results in Section~\ref{sec:fixed} are possible.
  The kernel cubature weights would presumably convergence to the weights $w_\textsm{pol}'$ such that $Q_X(w_\textsm{pol}')[p] = L[p]$ for every $p \in \Pi_X$.

\begin{acknowledgements}
This work was supported by the Aalto ELEC Doctoral School and the Academy of Finland.
We thank the reviewers for numerous comments that helped in improving the presentation.
\end{acknowledgements}


\end{document}